\def\ao#1{{#1}}
\def\aor#1{{ #1}}
\crefname{hypothesis}{Hypothesis}{Hypotheses}
\title{A Small-Gain Analysis of Single Timescale Actor Critic
}
\author{Alex Olshevsky\thanks{Department of Electrical and Computer Engineering and Division of System Engineering, Boston University,
  (\email{alexols@bu.edu}, \url{https://sites.bu.edu/aolshevsky}).}
\and Bahman Gharesifard\thanks{Department of Electrical and Computer Engineering, University of California at Los Angeles,  
  (\email{gharesifard@ucla.edu}, \url{https://gharesifard.github.io}).}
}
\newtheorem{assumption}{Assumption}
\newcommand\subscr[2]{#1_{\textup{#2}}}
\newcommand{\pder}[2]{\frac{\partial #1}{\partial #2}}
\newcommand{\pdertwom}[3]{\frac{\partial^2 #1}{\partial #2\partial #3}}
\begin{document}

\maketitle

\begin{abstract}
We consider a version of actor-critic which uses proportional step-sizes and only one critic update with a single sample from the stationary distribution per actor step. We provide an analysis of this method using the small-gain theorem. Specifically, we prove that this method can be used to find a stationary point, and that the resulting sample complexity improves the state of the art for actor-critic methods to $O \left(\mu^{-2} \epsilon^{-2} \right)$ to find an $\epsilon$-approximate stationary point where $\mu$ is the condition number associated with the critic.  
\end{abstract}


\begin{keywords}
Reinforcement Learning, Actor Critic, Nonconvex Optimization.
\end{keywords}

\begin{AMS}
  93E35, 90-08
\end{AMS}

\section{Introduction}

Since their introduction  decades ago in~\cite{barto1983neuronlike}, actor-critic methods have emerged as a popular approach to variety of reinforcement learning problems. Actor-only methods, such as policy gradient, seek to optimize a policy from a given class using noisy samples of the gradient. Estimating the gradient of a policy, however, is challenging because it requires knowing the true Q-values corresponding to the policy. While policy gradient methods usually estimate these using sample path rollout(s), actor-critic method adds a second parallel iteration designed to estimate the Q-values. It is sometimes asserted that, for a number of problems, actor-critic methods can outperform policy gradient due to the potentially high variance in sample path rollouts.

Possibly because of this, variants on actor-critic have achieved state-of-the-art performance on a number of benchmarks. For example, \cite{lazaridis2020deep} performs a comparison of several popular RL methods on a variety Atari games, finding that in the human-start regime an actor-critic variant comes first.

Most previous theoretical analyses of actor critic were based on two variations on the same idea. The first variation is to slow down the actor by giving the actor a step-size that decays slower than the critic (e.g., \cite{konda1999actor, konda2003onactor}). With this strategy,  we can use a perturbed version of the analysis of Temporal Difference (TD) methods to argue that the critic always has a good approximation of the correct Q-values. With such step-sizes, actor-critic can then be thought of as a  gradient method with error. 

An alternative approach is to do many critic steps per actor step. This can ensure that the actor always has good approximations of the correct Q-values coming from the critic, and, once again, this allows an analysis of actor-critic as a gradient method with error. It should be emphasized, however, that this method can outperform vanilla policy gradient methods because the variance from this procedure can be much better than than the variance from trajectory rollouts (a point made in \cite{xu2020improved}).

However, it is not clear that actor-critic really needs either of  these approaches, and indeed they are often not used in practice. Either approach is inconvenient, {\ao since both involve artificially slowing down the actor, and this results in slower convergence rates.  }

{ \ao Indeed, as we explain more formally later in the paper, actor-critic is an attempt to perform approximate iterations of stochastic gradient descent (SGD) on a non-convex objective.  The choice of step-size is crucial here. For non-convex SGD, a simple argument shows that a step-size of  $1/T^{u}$ multiplying the gradient at each step translates into a convergence rate of $\max(1/T^{u}, 1/T^{1-u})$ \cite{nonconvexsgd}. Informally speaking, the  first term of the maximum comes from analyzing the expected progress towards the optimal solution  at each step, while the second comes from the variance of each update. The best choice is $u=1/2$ which trades off the best between these and clearly any other choice of $u \in [0,1]$ results in a worse scaling of the maximum above. In actor-critic the same  tradeoff reappears: slowing down the actor relative to the best $T^{-1/2}$ step-size decay to make it possible for the critic to ``catch up'' results in a worse decay with $T$.
}

The underlying technical challenge in getting rid of these approaches is that, without two timescales or having the critic take many steps per actor step, one cannot immediately argue that the actor has access to approximately correct $Q$-values from the critic. As a result, it becomes much more challenging to connect actor-critic to perturbed gradient descent. 
Our work seeks to address this by showing how a version of the small gain theorem can be used to give an analysis of actor-critic without either of the above approaches.

\subsection{Literature Review} 

Because actor-critic methods are so widely used in reinforcement learning, recent years have seen several efforts to analyze their convergence. The most natural metric is the sample complexity, i.e., the number of samples needed to achieve a certain objective. In this case, the sample complexities are typically given in terms of finding a stationary point, i.e., one where the gradient squared of the actor is at most $\epsilon$, or the running averages of the squared norms of gradients is at most $\epsilon$. When the critic uses a linear approximation that cannot perfectly describe the actor's Q-values, the results below will give sample complexities until an approximate sttionary point, where the quality of the approximation depends on the accuracy that the critic is able to attain. We will use $O(\cdot)$ notation below, and this will be understood to hide logarithmic factors. Moreover, $\gamma$ denotes the discount factor of the problem (to be formally defined later). 

We begin by discussing the closely related works \cite{wu2020finite, shen2020asynchronous, kumar2019sample, xu2020improved}. All four of these papers gave a convergence analysis of actor-critic methods along with an associated sample complexities. The paper \cite{wu2020finite} studied a ``two time scale'' version where the actor and critic update using step-sizes that decay at different rates, attaining an $O(\epsilon^{-2.5})$ sample complexity. The paper \cite{shen2020asynchronous} gave a non-asymptotic analysis of an approximate version of actor-critic where an additional source of error proportional to $O(1-\gamma)$ was introduced in the final outcome. The algorithm also followed the two-timescale approach. This work also attained $O(\epsilon^{-2.5})$ sample complexity. The paper~\cite{kumar2019sample} considers several versions of actor-critic methods. While a sample complexity of $O(\epsilon^{-3})$ is obtained for one version of actor-critic, this was improved to $O(\epsilon^{-2.5})$ for an accelerated variation. These sample complexities required many critic steps per actor step. 

More recently, \cite{xu2020improving} was the first paper to attain a sample complexity $O(\epsilon^{-2})$. The algorithm used many critic steps per actor step. 
To explain the meaning of this result, note first that $O(\epsilon^{-2})$ is the rate one 
would obtain from gradient descent on a nonconvex function with noisy gradient evaluations. Thus if we did infinitely many critic steps per actor step, and then performed a standard  Stochastic Gradient Descent (SGD) analysis, we would immediately obtain an $O(\epsilon^{-2})$ rate. The work \cite{xu2020improving} attains the same decay rate with $\epsilon$, but with finitely many critic steps per actor step. 
We note that the interpretation of some of the results in  \cite{xu2020improving} was questioned in~\cite{khodadadian2021finite} (see Appendix C of that paper), but the critiques do not seem to apply to the actor-critic sample complexity we are discussing here. 

While writing this paper, we found the work \cite{chen2021closing} which is very closely related to the present paper: \cite{chen2021closing} gives an  analysis of single timescale actor-critic with $O(\epsilon^{-2})$ sample complexity. As far as we are aware, \cite{xu2020improving} and \cite{chen2021closing} are the only two papers to attain an $O(\epsilon^{-2})$ sample complexity for actor-critic; for these papers, we will thus also describe scaling with the condition number of the critic, to be formally defined in the main body of the paper, and which we denote by $\mu$. In terms of both $\epsilon$ and $\mu$ (and treating the remaining variables as constants), the scaling in \cite{xu2020improving} is $O \left(\mu^{-3} \epsilon^{-2}\right)$. The scaling in \cite{chen2021closing} is a little worse at  $O \left(\mu^{-4} \epsilon^{-2}\right)$.

Other related work considered algorithms building on the actor-critic framework. For example, the paper~\cite{hong2020two} considered a two-time scale version of the ``natural'' actor-critic method, where additional entropy regularizations are added to the update. Applied to a linearized version of the actor-critic objective, this gave an $O(\epsilon^{-4})$ sample complexity until that linearized objective is within $\epsilon$ of its optimal value. The paper~\cite{barakat2021analysis} considers a variation on actor critic by introducing a target update, with the result being  three different time scales for updates. The final sample complexity is $O(\epsilon^{-3})$. A similar sample complexity was given in the off-policy setting in~\cite{khodadadian2021finite}. We also mention  \cite{qiu2021finite}, which gave an iteration complexity analysis of actor-critic assuming the critic computes an approximation to the correct values per each actor step.  


For policy gradient methods, a sample complexity of $O(\epsilon^{-2})$ is known; see e.g., \cite{zhang2020global} for a modern analysis, and see also \cite{xu2020improved} for speedup. If more structure about the problem is known and one considers an exact version, one can sometimes obtain a better rate; for example, \cite{hambly2021policy} gives a geometric rate for an exact version of policy gradient when applied to a {\ao Linear-Quadratic Regulator (LQR)} problem. Likewise, information about about the structure of the optimal policy in general was shown to be useful in \cite{roy2021online}.

\subsection{Our contribution} 

Our  contribution is to use the small-gain theorem to analyze a single timescale actor critic method, i.e., a method which uses proportional step-sizes and a single critic step per actor step. We show that this method works in the same sense that all the previous methods work: it finds a stationary point when the critic's approximation is perfect, and an approximate stationary point otherwise.  We obtain a sample complexity of $O \left(\mu^{-2} \epsilon^{-2} \right)$. 

This compares favorably to the previous literature. One point of comparison is the now-classic work \cite{castro2010convergent}  also gave a
single timescale method; however, that method only converged to a neighborhood of a stationary solution even when the critic's approximation is perfect. As mentioned above, the only other analysis we know of single-timescale actor critic is from \cite{chen2021closing}, which gives a worse sample complexity of $O \left(\mu^{-4} \epsilon^{-2}\right)$. 
The main difference appears to be that where \cite{chen2021closing} use an explicit Lyapunov analysis, we instead rely on a small-gain analysis, which allows us to improve by a quadratic factor in $\mu^{-1}$. The key step in our approach is a nonlinear version of the small gain theorem which handles the complex dependencies between the actor-to-critic and critic-to-actor gains.

\section{Backround: Markov Decision Processes and Actor-Critic}

We consider a {\ao Markov Decision Process (MDP)} with a finite number of states and actions. When action $a$ is taken in state $s$, we will denote the probability that the next state is $s_{\rm next}$ by $P(s_{\rm next}|s,a)$, with $c(s,a)$ being the expected cost.   A policy is defined as a mapping from the set of states to the set of actions.  Defining $c_t$ to be the random variable representing the cost suffered at step $t$, our goal is to find a policy minimizing the infinite-horizon objective $E \left[ \sum_{t=0}^{\infty} \gamma^t c_t\right]$, where $\gamma \in (0,1)$ is the discount factor.

We will assume that we have parametrized the set of policies by $\theta$ with $\pi_{\theta}(a|s)$  being the probability of choosing action $a$ in state $s$. 
We   define the matrix $P_{\theta}$ to be the probability transition matrix among state-action pairs when actions are chosen according to $\pi_{\theta}$. We  define the value of a state $s$ as $J_{\theta}^*(s) = E_{\theta,s} \left[ \sum_{t=0}^{\infty} \gamma^t c_t\right]$, where the subscripts indicate that the starting state is $s$ and the policy followed is $\pi_{\theta}$. Similarly, we  define the $Q$-value as $Q_{\theta}^*(s,a) = E_{\theta,s,a} \left[ \sum_{t=0}^{\infty} \gamma^t c_t\right]$, where the subscripts indicate that one starts by taking action $a$ in state $s$ and follows the policy $\pi_{\theta}$ thereafter. We fix the initial distribution to some $\eta$ and consider the expectation 
\begin{equation} \label{eq:vdef}  V_{\theta}^{\eta} = \sum_{s} \eta(s) J_{\theta}^*(s), \end{equation} which is the expectation of the objective starting from  distribution $\eta$. 

We will use $p_{t,\theta}^{\eta}(s)$ to denote the probability that the state is $s$ at time $t$ if policy $\pi_{\theta}$ is followed after initializing the state from distribution $\eta$. The quantity $p_{t, \theta}^\eta$ will refer to the vector that stacks up $p_{t,\theta}^{\eta}(s)$ over all states $s$. Since $\eta$ will be fixed throughout this paper, we will usually omit it and simply write  $p_{t,\theta}(s)$, and likewise, we will simply write $V_{\theta}$ to represent the left-hand side of Eq.~\eqref{eq:vdef}. 

The policy gradient theorem~\cite{sutton2000policy} allows us to efficiently differentiate  $V_{\theta}$:
\begin{equation} \label{eq:pgt} \frac{d}{d \theta} V_{\theta}  = \sum_{s, t = 0, 1, \ldots} \gamma^t p_{t, \theta} (s) \sum_{a} \pi_{\theta}(a|s) Q_{\theta}^*(s,a) \frac{d}{d \theta} \log \pi_{\theta}(a|s). \end{equation}  
This identity allows us to efficiently apply a stochastic version of gradient descent as follows. We generate a random triple $(t, s_t, a_t)$ with probability $(1-\gamma) \gamma^t p_{t, \theta}(s_t) \pi_{\theta}(a_t|s_t)$ and update as
\begin{equation} \theta_{t+1} = \theta_t - \beta_t Q_{\theta_t}^*(s_t, a_t) \frac{d}{d \theta} \log  \pi_{\theta_t} (a_t|s_t), \label{eq:theta:ver1} 
\end{equation}
for an appropriately chosen step-size $\beta_t$. In the sequel, we will use $\nu_{\theta}$ to refer to the probability distribution over pairs $(s_t, a_t)$ induced by this sampling procedure.


The update of Eq.~\eqref{eq:theta:ver1} requires knowledge of the true $Q$-values. A policy gradient method will accomplish this with a trajectory rollout procedure. Specifically, a trajectory will be generated starting at state $(s_t,a_t)$ and the $Q$-value estimated by computing the empirical reward $\sum_{t} \gamma^t c_t$ on that trajectory. In practice, one typically attempts to save on sample complexity by generating just one trajectory and applying Eq.~\eqref{eq:theta:ver1} to each state-action pair while generating the empirical $Q$-value estimate by looking at the sub-trajectory beginning at $s_t,a_t$.  

By contrast, an actor-critic method will run a second update intended to produce estimates of the true $Q$-values. Since the number of state-action pairs $(s,a)$ is, in general, quite large, one typically attempts to find an approximation of the true $Q$-values.  We will assume that the critic will try to approximate the quantity $Q_{\theta}^*(s,a)$  linearly as 
\[ Q_{\theta}^*(s,a) \approx \sum_{i} \omega_i \phi_i(s,a), \] where each $\phi_i(\cdot, \cdot)$ can be thought of as extracting a feature from a state-action pair.  We can write this compactly as 
\[ Q_{\theta}^*(s,a) \approx \phi(s,a)^T \omega, \] where the vector $\phi(s,a)$ stacks up all the $\phi_i(s,a)$, and likewise $\omega$ stacks up the weights $\omega_i$. Moreover, by stacking up these vectors $\phi_i(s,a)$ as rows into a matrix $\Phi$, this can be written as 
\[ Q_{\theta}^* \approx \Phi \omega. \] Here $Q_{\theta}^*$ is a vector with as many entries as the number of state action pairs.

It is usually assumed that the best weights $\omega$ are unknown while a method to compute the features is available. To compute good weights, the critic will perform a TD(0) update by 
generating the state-action pair $s_t',a_t'$, then generating the next state and  action $s_t'', a_t''$ by following the MDP and the policy $\pi_{\theta}$ and suffering a cost of $c_t'$ in the process, and finally performing the update: 
\begin{equation} \label{eq:omega:ver1} \omega_{t+1} = \omega_t + \alpha_t \left(c_t' + \gamma \phi(s_t'', a_t'' )^T \omega_t - \phi(s_t',a_t')^T \omega_t \right) \phi(s_t', a_t'), \end{equation} 
for some appropriately chosen step-size $\alpha_t$. 
It is  important that $s_t', a_t'$ are sampled from the stationary distribution corresponding to policy $\pi_{\theta}$; otherwise, the TD(0) update above may diverge even for fixed $\theta$. When $s_t',a_t'$ are sampled from the stationary distribution, we will denote by $\mu_{\theta}$ the corresponding distribution over the four-tuple $(s_t',a_t', s_t'', a_t'')$.

In this paper, we will analyze an algorithm along these lines which we call {\em single-sample actor critic}. Its pseudocode is given in the algorithm box below. Note that it replicates Eq.~\eqref{eq:theta:ver1} and Eq.~\eqref{eq:omega:ver1} except that, in the actor's update of Eq.~\eqref{eq:theta:ver1} now uses approximation $\phi(s_t, a_t)^T \omega_t$ instead of the true $Q$-value.

We also add a projection onto the set $\Omega$ to the critic update, which we take to be a  compact convex set. Below, we discuss how to choose this set to be large enough so that it does not affect what the method is converging to. This is a standard tweak in actor-critic methods: it is a simple way to ensure a-priori that the iterates remain bounded. For example, the earlier papers \cite{wu2020finite, hong2020two, shen2020asynchronous, fu2020single} all add a projection, while \cite{barakat2021analysis} explicitly assume that the iterates remain bounded (which we can do as well without affecting any of the  results in this work). 

\begin{algorithm} 
\caption{Single sample actor critic}
\label{mainalg}
 Initialize at arbitrary $\theta_0, \omega_0$. 
 {\bf for} $t=1,2,\ldots$
 
 Generate a pair $(s_t, a_t)$ from $\nu_{\theta}$ and four-tuple $(s_t', a_t', s_t'', a_t'')$   from 
$\mu_{\theta}$ (independently from each other and from all past samples) and update as
\begin{align} \theta_{t+1} & =   \theta_t - \beta_t \left( \phi(s_t, a_t)^T \omega_t \right) \frac{d}{d \theta} \log \pi_{\theta} (a_t|s_t) \label{eq:theta:3} \\
 \omega_{t+1} & = P_{\Omega} \left[ \omega_t + \alpha_t  \left(c_t' + \gamma \phi(s_t'', a_t'')^T \omega_t  - \phi(s_t',a_t')^T \omega_t \right) \phi(s_t', a_t') \right] \label{eq:omega:3}
 \end{align}
\end{algorithm}

We remark that we call our method ``single sample actor critic'' because it only uses a single sample to update the actor and a single sample to update the critic at each step.  However, note that the samples here are not samples from the MDP directly: rather, we need to sample from the stationary distribution for the critic, and from the distribution $\nu_{\theta}$ for the actor. Multiple samples from the MDP will be needed to generate these single samples. 

Specifically,  this can be done by  having the critic generate a sufficiently long path from the MDP for mixing to occur and throw away all the samples except the last one. Likewise, the actor can generate a geometric random variable with density proportional to $\gamma^t$, sample from it, and generate path from the MDP with length equal to the sample thus generated. As a consequence, in order to translate any sample complexity obtained for this procedure into a sample complexity in terms of MDP samples, one needs multiply by $(1-\gamma)^{-1}$ plus an upper bound on the the mixing time of the policies.


\section{Assumptions} To analyze single-sample actor critic, we will need to make a number of assumptions. The assumptions we will make are quite standard in the literature, and will generally assume that a quantity encountered through the course of the algorithm is bounded. Our first assumption will assume that the  costs encountered throughout the algorithm do not blow up. {\ao This assumption helps ensure that various quantities appearing throughout the execution of the actor-critic method are naturally bounded.}

\begin{assumption}[Bounded Costs] There exists some finite $C_{\rm max}$ such that the costs $c_t$ belong to $[-C_{\rm max}, C_{\rm max}]$ with probability one. \label{ass:bcost}
\end{assumption}


Next, recall that the critic will generate samples from the stationary distribution from each policy. We need to ensure that the stationary distribution exists, and additionally, the convergence to it needs to be uniform over all $\theta$, as in the following assumption.  

\begin{assumption}[Uniform Mixing] Let $\rho_{t,\theta}$ be the distribution over state-action pairs after $t$ transitions of following policy $\pi_{\theta}$. There is a distribution $\rho_{\theta}$ and constants $C$ and $\lambda \in [0,1)$ such that 
\[ ||\rho_{t, \theta} - \rho_{\theta}||_1 \leq C \lambda^t. \]  \label{ass:stationary}
\end{assumption}

Note again the uniformity in the right-hand side above in that $C$ and $\lambda$ do not depend on $\theta$. Next, we will also need some regularity assumption on the parametrization of the policy. {\ao It goes without saying that optimization of smooth functions is easier than optimization of non-smooth functions and the following assumption ensures that various quantities appearing throughout the execution of actor-critic are smooth.}

\begin{assumption}[Smoothness of policy parametrization] The function $\pi_{\theta}(a|s)$ is a  \aor{thrice} continuously differentiable function of $\theta$ for all state-action pairs $s,a$. Further, there exist constants $K,K', K''$ such that for all $\theta_i,s,a$ we have
\[ \aor{||}\nabla_{\theta} \log \pi_{\theta}(a|s)\aor{||}  \leq  K, ~~
\aor{||}\nabla_{\theta} \pi_{\theta}(a|s)\aor{||}  \leq  K', ~~ 
 \aor{||}\nabla^2 \pi_{\theta}(a|s)\aor{||} \leq  K''.
\]  \label{ass:pi}
\end{assumption}

Next, observe that if the columns of the matrix $\Phi$ are linearly dependent, then we have some redundancy in the features: we can delete some features without altering the subspace that can be represented as a linear combination of feature vectors. It is of course helpful to assume the features are not redundant.
{ \ao If the features were redundant, then the critic might have multiple optimal solutions when approximating the value function corresponding to a policy, which complicates the analysis.} 
Additionally, it is also convenient to normalize the features so that all the feature vectors $\phi(s,a)$ have at most norm one. 

\begin{assumption}[Nonredundancy and norm of features] The matrix $\Phi$ is nonsingular and each of its rows has at most unit norm. \label{ass:features}
\end{assumption} 


Next, it is natural that the final accuracy of actor-critic depends on how well the critic is able to approximate the true $Q$-functions encountered in the course of the algorithms as linear combinations of features. {\ao For example, if the featurs are ``bad'' in that they fail to accurately approximate the $Q$-values associated with the actor policies, clearly actor-critic will perform poorly.} To that end, we have an assumption that provides a measure of this.

\begin{assumption}[Approximability of the true $Q$-values by the critic] 
Define $\omega_{\theta}$ to be the limit of temporal difference update of Eq.~\eqref{eq:omega:ver1} when the policy is fixed to $\pi_{\theta}$ and the projection is removed\footnote{That this limit exists was proven under Assumptions \ref{ass:bcost}, \ref{ass:stationary}, and \ref{ass:features} in \cite{tsitsiklis1996analysis}.}. Then for some $\delta  > 0$,  \[ \sup_{\theta} E_{s,a \sim \nu_{\theta}} ~ | Q_{\theta}^*(s,a) - \phi(s,a)^T \omega_{\theta} |  \leq \delta. \] \label{ass:deltadef}  \end{assumption} 

In words, $\delta$ determins how well the true $Q$-values are approximated by TD learning on the average. 
If the linear approximation always perfectly describes the true $Q$-values, then $\delta = 0$. As we will see, $\delta$ will appear in our convergence results below, as in the appropriate sense getting a final error in actor-critic that is too small compared to $\delta$ is not possible. 

Before making our next assumption, we need to introduce some new notation. Observe that it is possible to write~\eqref{eq:omega:3} as 
\begin{equation} \label{eq:omega:rewritten} \omega_{t+1} = P_{\Omega} \left( (I + \alpha_t  A_t) \omega_t - \alpha_t b_t) \right).
\end{equation} Indeed, inspecting~\eqref{eq:omega:3}, we have that
\begin{eqnarray}
A_t  =   \phi(s_t',a_t') (\gamma \phi(s_t'',a_t'') - \phi(s_t',a_t'))^T, ~~ 
b_t  = -c_t' \phi(s_t',a_t') \label{eq:at}  
\end{eqnarray}

Further, let us define $
A_{\theta}  =  E[A_t]$ and $
b_{\theta}  =  E[b_t]$. We emphasize the dependence on $\theta$ here because the expectation is being taken by generating the tuple $s_t',a_t',s_t'', a_t''$ from $\mu_{\theta}$. Since these samples are being generated i.i.d., there is no dependence on $t$. With this notation, our next assumption is as follows. 

\begin{assumption}[Exploration] There exists {\color{red} a $\mu \in (0,1)$} such that \label{ass:eigen}
\[ \sup_{\theta} \sup_{||x||=1} x^T A_{\theta} x \leq -\frac{\mu}{2} < 0. \] \label{ass:explore}
\end{assumption} 
The assumption is labeled as ``exploration'' because it holds if the policies $\pi_{\theta}$ explore all state-action pairs, as explained in Section \ref{sec:exploration} below. It is also a standard assumption: it is made in~\cite{wu2020finite, shen2020asynchronous, qiu2021finite}, and is closely related to assumptions which are made in \cite{kumar2019sample, barakat2021analysis}. 

Finally, we need to make sure our compact set $\Omega$ is large enough so that projection onto it does not introduce spurious minima. 

\begin{assumption}[Projection Set] The set $\Omega$ is a compact convex set which contains all of the vectors $\omega_{\theta}$ in its interior.  \label{ass:projection}
\end{assumption} 

{\ao The compactness of this set allows us to ensure that the critic iterates do not grow unbounded. The further assumption that $\omega_{\theta}$ is there to make sure that the quality of critic approximation is unaffected by the projection. }


\begin{remark} {\ao With one exception, there are no dependency relationships between the assumptions we make. In particular, it is possible for any subset of them to be satisfied or unsatisfied. The only exception to this is that Assumption 7 cannot even be stated without Assumption 2. Indeed, the matrix $A_{\theta}$ is defined by random sampling from the distribution $\mu_{\theta}$ (defined immediately after Eq. (2.4)), which is built from the stationary distribution of the policy $\pi_{\theta}$; Assumption 2 says that this stationary distribution is well-defined. }
\end{remark}

\subsection{The Exploration Assumption\label{sec:exploration}}

We now discuss the form of Assumption \ref{ass:explore}: specifically, we explain why we need to have a set of policies that have some exploration, and why that implies that Assumption \ref{ass:explore} must hold.

Indeed, suppose that a particular action $a$ in state $s$ has a large negative cost, so much so that the best policy must always take it. Any sample-based method needs to select that action at least once before it can incorporate this knowledge. However, glancing at Eq.~\eqref{eq:theta:3} we see this can fail to happen. Indeed, it may be that $\pi_{\theta}(a|s)=0$ for a set of $\theta$; then $(s,a)$ will never appear in Eq.~\eqref{eq:theta:3} and the update might always stay within the set of $\theta$ where $\pi_{\theta}(a|s)=0$. 

One way to overcome this  is to require that the policies in question have a positive probability of choosing every state action pair. For a finite-state MDP, this immediately implies Assumption \ref{ass:explore}, as the proposition below explains. 

\begin{proposition} Suppose Assumption \ref{ass:stationary} and Assumption \ref{ass:features} hold, and suppose further there exists some $p_{\rm min} > 0$ such that 
\[ \inf_{\theta, s, a} \pi_{\theta}(a|s) \geq p_{\rm min}. \] Then Assumption \ref{ass:explore} holds. \label{prop:exploration}
\end{proposition}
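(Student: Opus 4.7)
The plan is to reduce the proposition to the classical TD(0) contraction analysis and then extract a bound that is uniform in $\theta$. First, I would rewrite $A_\theta$ in closed form. Taking the expectation of~\eqref{eq:at} over $\mu_\theta$ gives
\[
 A_\theta \;=\; \Phi^T D_\theta (\gamma P_\theta - I)\, \Phi,
\]
where $D_\theta = \mathrm{diag}(\rho_\theta)$ collects the stationary probabilities on state--action pairs and $P_\theta$ is their transition matrix under $\pi_\theta$. Since $\rho_\theta$ is invariant for $P_\theta$, the operator $P_\theta$ is a non-expansion in the weighted norm $\|y\|_{D_\theta}^2 := y^T D_\theta y$, and Cauchy--Schwarz then gives $y^T D_\theta(I - \gamma P_\theta) y \geq (1-\gamma)\, y^T D_\theta y$ for every $y$.

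Substituting $y = \Phi x$ and using Assumption~\ref{ass:features}, which guarantees $\Phi^T\Phi \succeq \sigma_{\min}(\Phi)^2 I$ with $\sigma_{\min}(\Phi) > 0$, I would obtain
\[
 x^T(-A_\theta)\, x \;\geq\; (1-\gamma)\, \sigma_{\min}(\Phi)^2 \cdot \Big(\min_{s,a} \rho_\theta(s,a)\Big)\,\|x\|^2.
\]
Hence the proposition reduces to establishing a uniform lower bound $\inf_\theta \min_{s,a} \rho_\theta(s,a) \geq \rho_{\min}$ for some $\rho_{\min} > 0$, after which the claim follows with $\mu = 2(1-\gamma)\sigma_{\min}(\Phi)^2 \rho_{\min}$.

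The main obstacle is this uniform lower bound on $\rho_\theta$, and it is where Assumption~\ref{ass:stationary} and the policy bound $p_{\min}$ work together. The policy bound reduces the task to lower bounding the state marginal $\eta_\theta(s) := \sum_a \rho_\theta(s,a)$ uniformly, since $\rho_\theta(s,a) = \eta_\theta(s) \pi_\theta(a|s) \geq p_{\min}\, \eta_\theta(s)$. For $\eta_\theta$, I would choose $T$ independent of $\theta$ with $C\lambda^T \leq 1/4$ so that, by Assumption~\ref{ass:stationary}, the $T$-step distributions $P_\theta^T((s_0,a_0),\cdot)$ from any two initial state--action pairs are within total variation $1/2$ of each other. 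Combining this with the entrywise bound $P_\theta^T((s_0,a_0),(s,a)) \geq p_{\min}^T$ times a purely MDP-dependent reachability factor (obtained by unrolling $T$ transitions and applying $\pi_\theta \geq p_{\min}$ at each step), one derives a uniform Doeblin-type minorization $P_\theta^T((s_0,a_0),(s,a)) \geq \rho_{\min}$ for some $\rho_{\min} > 0$ independent of $\theta$ and of the initial pair. The stationarity identity $\rho_\theta = \rho_\theta P_\theta^T$ then forces $\rho_\theta(s,a) \geq \rho_{\min}$, closing the argument.
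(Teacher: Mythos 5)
Your proof is correct in substance and arrives at the same quantitative bound as the paper, but it assembles that bound from more elementary ingredients. The paper invokes the gradient-splitting identity of \cite{liu2021temporal}, Eq.~\eqref{eq:quad}, which writes $(\omega-\omega_\theta)^T A_\theta (\omega-\omega_\theta)$ exactly as $-(1-\gamma)\|\Phi(\omega-\omega_\theta)\|_{\rm D}^2 - \gamma\|\Phi(\omega-\omega_\theta)\|_{\rm Dir}^2$ and then effectively discards the Dirichlet term; your derivation of $y^T D_\theta(I-\gamma P_\theta)y \geq (1-\gamma)\|y\|_{D_\theta}^2$ from the closed form $A_\theta=\Phi^T D_\theta(\gamma P_\theta - I)\Phi$ and the non-expansiveness of $P_\theta$ in the $D_\theta$-weighted norm (the classical Tsitsiklis--Van Roy argument) produces exactly the surviving term, so the two routes are equivalent at this stage, with yours being self-contained and the paper's being citation-based. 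Where you genuinely add content is the final step: the paper simply asserts that the assumptions ``clearly imply'' a uniform lower bound on $\rho_\theta(s)\pi_\theta(a|s)$, whereas you supply an actual mechanism (factor out $p_{\rm min}$, then a Doeblin-type minorization of $P_\theta^T$ combined with the invariance $\rho_\theta = \rho_\theta P_\theta^T$). One caveat applies to both arguments: the ``purely MDP-dependent reachability factor'' in your minorization is strictly positive only if every state--action pair is reachable from every other in the chosen number of steps; Assumption~\ref{ass:stationary} alone does not rule out transient states, for which $\rho_\theta(s,a)=0$, your minorization constant degenerates to zero, and indeed Assumption~\ref{ass:explore} itself can fail (take $x$ supported on the transient coordinates in the tabular case). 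Since the paper's proof tacitly relies on the same recurrence property, this is a shared implicit hypothesis on the MDP rather than a defect specific to your argument, but it is worth flagging explicitly if you write the Doeblin step out in full.
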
 

\begin{proof} We will rely on the main result of \cite{liu2021temporal}, which interprets the mean TD(0) direction as a ``gradient splitting'' of an appropriately defined function and uses this to derive the following identity:
\begin{equation} \label{eq:quad} (\omega - \omega_{\theta})^T A_{\theta} (\omega - \omega_{\theta}) = -(1-\gamma) ||\Phi (\omega - \omega_{\theta})||_{\rm D}^2 - \gamma || \Phi (\omega - \omega_{\theta})||_{\rm Dir}^2, 
\end{equation} where, adopting the notation $\rho_{\theta}(s)$ for the stationary probability of state $s$ under the policy $\pi_{\theta}$, 
\begin{eqnarray*} 
||x||_{\rm D}^2 & =&  \sum_{s,a} \rho_{\theta}(s) \pi_{\theta}(a|s) x(s,a)^2 \\ 
||x||_{\rm Dir}^2 & = & \sum_{s,a,s',a'} \rho_{\theta}(s) \pi_{\theta}(a|s) P(s'|s,a) \pi_{\theta}(a'|s') (x(s,a) - x(s',a'))^2
\end{eqnarray*} Note that while the main result of \cite{liu2021temporal} was shown for TD(0), the proof applies immediately to TD(0) on the state action pairs. 
Now the assumptions clearly imply a uniform bound over all $\theta$ on how small $\rho_{\theta}(s) \pi_{\theta}(a|s)$ can be, which along with~\eqref{eq:quad} yield the result. 
\end{proof}


The parameter $\mu$ may be thought of as the condition number associated with the critic. Indeed, glancing at Eq. (\ref{eq:quad}) we see that the left-hand side measures the inner product between the $\omega - \omega_{\theta}$, the offset from the TD limit, and the TD direction $A(\omega - \omega_{\theta})$. The definition of $\mu$ bounds this as being below $-(\mu/2) ||\omega - \omega_{\theta}||^2$. Thus $\mu$ tells us how well the TD direction of the critic aligns with the vector pointing from $\omega$ to the final limit. 
It is thus not surprising that existing analysis of temporal difference learning under the optimally decaying step-size $1/t$ have  factors of $\mu^{-1}$ in the final bounds (e.g., \cite{bhandari2018finite, srikant2019finite, dalal2018finite}).

\section{Our Main   Result} 

We will use the short hand
\[ \Delta_t = \omega_t - \omega_{\theta_t} \] to denote the difference between the critic iterate at time $t$ and the ultimate limit of the critic update corresponding to the policy $\pi_{\theta_t}$. Further, it will be convenient to adopt the shorthand
\[ \nabla_t = \nabla V(\theta_t). \] Since the function $V(\theta)$ is not assumed to be convex, our goal is to argue that actor-critic finds a point such that $\nabla_t \approx 0$. A standard performance measure for this in non-convex optimization is the running average of the gradients, and it is typically argued that this is close to zero. Below, we find it convenient to make a slight modification, taking the average over the last half of the iterations. 

\begin{theorem} Suppose Assumptions \ref{ass:bcost} -- \ref{ass:projection} hold. We can choose $\alpha_t = 1/\sqrt{t}$ and $\beta_t = c/\sqrt{t}$, where $ c>0 $ is an appropriate constant chosen depending on the problem parameters such that the sequence of iterates produced by the Single Sample Actor Critic satisfies \begin{eqnarray*} \frac{1}{t/2} \sum_{k=t/2}^{t-1} ||\nabla_k||^2 & = & O \left( \delta^2 + \frac{\mu^{-1}}{\sqrt{t}} \right) 
\\ 
\frac{1}{t/2} \sum_{k=t/2}^{t-1} ||\Delta_k||^2 & = & O \left( \delta^2 + \frac{\mu^{-1}}{\sqrt{t}} \right) 
\end{eqnarray*}  where all \aor{quantities} except $\delta, \aor{\mu}, t$ are treated as constants in the $O(\cdot)$-notation. 

\label{thm:iid}
\end{theorem}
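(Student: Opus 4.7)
The plan is to derive two coupled inequalities—one controlling the actor's descent on $V(\theta)$ and one controlling the critic's tracking error $\|\Delta_t\|^2$—and then combine them through a nonlinear small-gain argument. The key structural point is that on single timescale neither system can be analyzed in isolation: the actor's stochastic direction is biased by $\|\Delta_t\|$, while the critic's drift term is driven by how fast $\theta_t$ moves.

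\textbf{Step 1 (Critic tracking inequality).} Starting from Eq.~\eqref{eq:omega:rewritten}, I rewrite
\[
\omega_{t+1} - \omega_{\theta_t} \;=\; (I + \alpha_t A_{\theta_t})\Delta_t + \alpha_t M_t,
\]
where $M_t := (A_t - A_{\theta_t})\omega_t - (b_t - b_{\theta_t})$ is zero-mean conditional on $(\theta_t,\omega_t)$ because the critic's tuple $(s_t',a_t',s_t'',a_t'')$ is drawn fresh from $\mu_{\theta_t}$. Assumption~\ref{ass:eigen} yields $x^\top(I+\alpha A_\theta)^\top(I+\alpha A_\theta)x \le (1-\alpha\mu + \alpha^2 L)\|x\|^2$, giving geometric contraction in expectation toward $\omega_{\theta_t}$. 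To convert this into a bound on $\|\Delta_{t+1}\|^2 = \|\omega_{t+1} - \omega_{\theta_{t+1}}\|^2$, I insert $\omega_{\theta_t}$ and apply Young's inequality with parameter $\varepsilon = \alpha_t\mu/4$. This requires a lemma that $\theta\mapsto \omega_\theta = A_\theta^{-1}b_\theta$ is Lipschitz, which I obtain from Assumption~\ref{ass:pi} together with the fact that $A_\theta$ is uniformly bounded away from singularity by Assumption~\ref{ass:eigen}. The actor update magnitude $\|\theta_{t+1}-\theta_t\|$ is bounded using compactness of $\Omega$ and Assumption~\ref{ass:pi}. The result is
\[
E\|\Delta_{t+1}\|^2 \;\le\; (1 - c_1\alpha_t\mu)E\|\Delta_t\|^2 + c_2\alpha_t^2 + c_3\tfrac{\beta_t^2}{\alpha_t\mu}.
\]

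\textbf{Step 2 (Actor descent inequality).} Smoothness of $V(\theta)$, which follows from Assumption~\ref{ass:pi} and the policy gradient theorem in Eq.~\eqref{eq:pgt}, yields
\[
E[V(\theta_{t+1})] \le V(\theta_t) - \beta_t \nabla_t^\top E[g_t\mid\theta_t] + \tfrac{L\beta_t^2}{2}E\|g_t\|^2,
\]
where $g_t = \phi(s_t,a_t)^\top\omega_t \cdot \nabla_\theta\log\pi_{\theta_t}(a_t|s_t)$. I split $E[g_t\mid\theta_t] - \nabla_t$ into the portion coming from replacing $\phi^\top\omega_t$ by $\phi^\top\omega_{\theta_t}$ (bounded in magnitude by $C\|\Delta_t\|$) and the portion coming from replacing $\phi^\top\omega_{\theta_t}$ by the true $Q^*$ (bounded by $C\delta$ in expectation, by Assumption~\ref{ass:deltadef}). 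Using $2\nabla_t^\top\text{(bias)} \le \tfrac12\|\nabla_t\|^2 + 2\|\text{bias}\|^2$ gives
\[
\tfrac{\beta_t}{2}\|\nabla_t\|^2 \;\le\; V(\theta_t) - E[V(\theta_{t+1})] + c_4\beta_t\|\Delta_t\|^2 + c_5\beta_t\delta^2 + c_6\beta_t^2.
\]

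\textbf{Step 3 (Small-gain combination).} With $\alpha_t = 1/\sqrt{t}$ and $\beta_t = c/\sqrt{t}$, I sum the critic inequality over $k = t/2,\ldots,t-1$. A standard weighted telescoping (multiplying by $1/\alpha_k\mu$ and summing) yields
\[
\tfrac{1}{t/2}\sum_{k=t/2}^{t-1}E\|\Delta_k\|^2 \;\le\; \frac{C_1}{\mu\sqrt{t}} + \frac{C_2\,c}{\mu}\cdot \tfrac{1}{t/2}\sum_{k=t/2}^{t-1} E\|\nabla_k\|^2 + \frac{C_3}{\mu}\delta^2.
\]
Summing the actor inequality and dividing by $\sum\beta_k \asymp \sqrt{t}$ yields symmetrically
\[
\tfrac{1}{t/2}\sum_{k=t/2}^{t-1}E\|\nabla_k\|^2 \;\le\; \frac{C_4}{\sqrt{t}} + C_5 \tfrac{1}{t/2}\sum_{k=t/2}^{t-1}E\|\Delta_k\|^2 + C_6\delta^2.
\]
These two inequalities form a linear system in the two averages with off-diagonal gains $C_2c/\mu$ and $C_5$. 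Choosing the proportionality constant $c$ so that $C_2C_5 c/\mu < 1/2$—i.e., $c = \Theta(\mu)$—closes the small-gain loop, and solving the $2\times 2$ system gives the stated bound $O(\delta^2 + \mu^{-1}/\sqrt{t})$ for each average.

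\textbf{Main obstacle.} The delicate part is bookkeeping the dependence on $\mu$ throughout. A naive Lyapunov combination $V(\theta_t) + \lambda\|\Delta_t\|^2$ forces $\lambda \asymp 1/\mu$, and then the perturbation term $\beta_t^2/(\alpha_t\mu)$ gets multiplied by $\lambda$, producing the $\mu^{-2}$ scaling seen in prior work. The small-gain viewpoint avoids this: the critic-to-actor coupling is linear in $\|\Delta_t\|^2$, and the actor-to-critic coupling contributes only through $\beta_t/\alpha_t = c = \Theta(\mu)$, so the $\mu^{-1}$ factor appears once rather than twice. Making this rigorous requires showing that the residual noise term $\alpha_t^2$ in the critic recursion, when averaged with weights $1/(\alpha_k\mu)$, produces exactly the $\mu^{-1}/\sqrt{t}$ rate rather than $\mu^{-2}/\sqrt{t}$.
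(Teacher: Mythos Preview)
Your Step~1 contains a real gap that breaks the final rate. When you apply Young's inequality with parameter $\varepsilon=\alpha_t\mu/4$ to the cross term $2\langle \omega_{t+1}-\omega_{\theta_t},\,\omega_{\theta_t}-\omega_{\theta_{t+1}}\rangle$, the resulting term $\tfrac{C}{\alpha_t\mu}\,E\|\omega_{\theta_{t+1}}-\omega_{\theta_t}\|^2$ contains an irreducible contribution $\tfrac{C L_\omega^2\beta_t^2}{\alpha_t\mu}\,\sigma_a^2$ coming from the actor's noise $w_{\rm a}(t)$. With $\alpha_t=1/\sqrt t$, $\beta_t=c/\sqrt t$, this is $\tfrac{Cc^2\sigma_a^2}{\mu\sqrt t}$; after summing over $k=t/2,\dots,t-1$ with weight $1/(\alpha_t\mu)$ and dividing by $t/2$, it contributes $\Theta(c^2\sigma_a^2/\mu^2)$ to the averaged critic error. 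Since closing the small-gain loop forces $c=\Theta(\mu)$, this is an $O(\sigma_a^2)$ constant that does \emph{not} vanish as $t\to\infty$. (This is also why your Step~1 recursion has no $\|\nabla_t\|^2$ term while your Step~3 summed bound suddenly does: the derivation is not internally consistent.) Your concluding paragraph correctly diagnoses that a naive Lyapunov combination produces exactly this $\mu^{-2}$-scaled noise term---but your Young step with $\varepsilon\propto\alpha_t\mu$ is precisely that naive combination in disguise.

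The paper's proof avoids this by refusing to square the cross term. It splits $\theta_{t+1}-\theta_t$ into the deterministic part $-\beta_t g(\theta_t,\omega_t)$ and the zero-mean part $-\beta_t w_{\rm a}(t)$, then does a second-order Taylor expansion of $\theta\mapsto\omega_\theta$ (this requires a uniform bound on $\nabla^2_\theta\omega_\theta(i)$, proved separately). The first-order noise contribution to $\langle\omega_{\theta_{t+1/2}}-\omega_{\theta_{t+1}},\,\omega_t-\omega_{\theta_t}-\alpha_t\widetilde\nabla\rangle$ has zero conditional mean, so only the Hessian remainder survives, giving a term of order $\beta_t^2\|\Delta_t\|$ rather than $\beta_t^2\sigma_a^2/(\alpha_t\mu)$. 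After summing and Cauchy--Schwarz this produces a $\sqrt{\tfrac{1}{t/2}\sum E\|\Delta_k\|^2}$ on the right-hand side of the critic bound, and the paper then invokes a \emph{nonlinear} small-gain lemma (of the form $\|x\|_T\le a+b\sqrt{\|x\|_T}+c\|z\|_T$, $\|z\|_T\le d+e\|x\|_T$) rather than your linear $2\times2$ system. The square-root structure is not an artifact; it is what lets the actor-noise term enter as $b^2$ in the final bound and hence decay.
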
 

In particular, if the critic can approximate the $Q$-values exactly, i.e., $\delta=0$, we have that the running average of the last half of the gradient norms squared approaches zero. Moreover, this running average is below $\epsilon$ after $O \left( \mu^{-2} \epsilon^{-2}\right)$ iterations. As discussed earlier, this sample complexity compares favorably to the scalings with $\mu$ and $\epsilon$ obtained in the previous literature.


Finally, if a specific point with gradient squared upper bounded by $\epsilon$ is sought, a simple trick is to take $t$ large enough so that the right-hand side of Theorem \ref{thm:iid} is below $\epsilon$, and then take a uniformly random iterate from the last $t/2$ iterates. Then, in expectation the squared norm of the resulting gradient will be at most $\epsilon$.

\begin{remark} {\ao In practice, step-sizes in actor-critic are almost always taken to be small and fixed on both the actor and the critic. In particular, as afar as we are aware, practitioners never choose a step-size on the actor that decays faster to zero than the step-size of the critic. A reader wishing to read about the details of a modern, state-of-the-art implementation of actor-critic methods can refer to  \cite{lazaridis2020deep}.}
\end{remark}

\subsection{Key ideas in the proof of Theorem \ref{thm:iid}} We will proceed by applying a version of the small gain theorem to the quantities of the left-hand side of Theorem \ref{thm:iid}, i.e., to the averages of $||\nabla_k||^2$ and $||\Delta_k||^2$.  In the ``easy'' direction, it is clear that if the error on the critic side is small (i.e., if the $||\Delta_k||$s are small), then the error on the actor side is small too; this follows via standard ``SGD with errors'' type analysis (indeed, we can view the actor as performing SGD with the error coming from the critic approximation).  In the more difficult direction, one needs to argue that if the actor is approximately close to stationarity, i.e., the $\nabla_k$ are small, then the errors of the critic $\Delta_k$ will also be small. If separate bounds conforming to this intuition can be established, one can attempt to put them together into an unconditional bound on both actor and critic using a small-gain type analysis.

Let us explain the challenges in deriving a bound on the critic error $\Delta_t$ in terms of the actor stationarity $\nabla_t$. If the actor simply followed gradient descent on the objective $V(\theta)$, then it would be fairly straightforward that small actor gradients imply small critic errors; this would follow since small gradients $\nabla_k$ means the actor moves little in expectation from step to step, so that the critic is approximately doing temporal difference steps. Thus one could simply apply a standard analysis of temporal difference learning with some added perturbations. 

Unfortunately, because the actor relies on the critic for its estimates of the $Q$-values from which it builds its estimate of the gradient direction, it is possible for the actor to be close to stationarity (i.e., $\nabla_k$ small) and yet move quite far from step to step (i.e., $\theta_{k+1} - \theta_k$ large). This dependency is what makes it challenging to argue that if the actor is close to stationarity, the critic errors are small. Ultimately, this is what results in a  nonlinear relationship between the gains involved and forces us to use a nonlinear small gain theorem for the analysis. We remark that our proof does not rely on the ODE method \cite{borkar2000ode, devraj2021fundamental}.



\section{Proof of our main result} 

We now begin a proof of our main result. Our first step is to reformulate the actor-critic update in terms of a coupled gradient-liked updates which has more of an optimization flavor. However, we first need to introduce some convenient notation. {\em From this point on, we assume  Assumptions \ref{ass:bcost} -- \ref{ass:projection} hold.}

\subsection{Notation} For convenience, we will introduce the notation $$Q_{\theta}(s,a) = \phi(s,a)^T \omega_{\theta},$$ where recall that $\omega_{\theta}$ was introduced in the statement of Assumption \ref{ass:deltadef}. Informally, $Q_{\theta}(s,a)$ will denote the ``ultimate TD approximation'' of the true $Q$-value $Q_{\theta}^*(s,a)$: it is what we would get if we fixed $\theta$ and let the critic take infinitely many steps. 

Moreover, we will use 
$$ Q_t(s,a) = \phi(s,a)^T \omega_t. $$ Informally, $Q_t(s,a)$ the estimate of the true $Q$ value $Q_{\theta}^*(s,a)$ which is obtained by the critic after $t$ steps of the actor-critic method. We stack these up into the vector $Q_t$ which has as many entries as the number of state-action pairs. Formally, $Q_t = \Phi \omega_t$. 

\subsection{Reformulating Actor-Critic}

We now give a re-formulation of the underlying problem in terms of two coupled gradient-like updates. We note that this section is not particularly new and such reformulations are common in all previous analysis of actor-critic. 

We begin by rewriting Eq.~(\ref{eq:omega:3}) as 
\[ \omega_{t+1} = P_{\Omega} \left[ \omega_{t} - \alpha_t \left( \widetilde{\nabla}(\theta_t, \omega_t) + w_{\rm c}(t) \right) \right]. \]
Here  $w_{\rm c}(t)$ is a random variable with zero expectation conditional on the entire past trajectory; and $\tilde{\nabla} (\theta_t, \omega_t)$ is the expected TD direction. More formally, we have that 
\[ \widetilde{\nabla}(\theta_t, \omega_t) = - A_{\theta_t} \omega_t + b_{\theta_t},  \] where  the quantities $A_{\theta}, b_{\theta}$ were defined earlier after Eq. (\ref{eq:at}). 

Note that, since $\omega_{\theta}$ is defined to be the limiting point of temporal diference learning (which exists due to the results of \cite{tsitsiklis1996analysis}), and is assumed to lie in the interior of the set $\Omega$, we have that  
\begin{equation} \label{eq:thetazero} \widetilde{\nabla} (\theta, \omega_{\theta}) = 0. \end{equation} An immediate consequence of this is that 
\begin{eqnarray*} \widetilde{\nabla}(\theta, \omega)^T (\omega - \omega_{\theta}) & = & (- A_{\theta} \omega + b_{\theta})^T (\omega - \omega_{\theta}) \\ 
& = & (- A_{\theta} \omega + b_{\theta})^T (\omega - \omega_{\theta}) - (- A_{\theta} \omega_{\theta} + b_{\theta})^T (\omega - \omega_{\theta}) \\ 
& \geq & \frac{\mu}{2} ||\omega - \omega_{\theta}||^2, 
\end{eqnarray*} where we used Eq. (\ref{eq:thetazero}) while the last inequality used Assumption \ref{ass:explore}. 


We next turn to  the actor update. It is immediate that we can rewrite Eq. (\ref{eq:theta:3}) as 
\[ \theta_{t+1}= \theta_t + \beta_t \left(  \sum_{s,k=0,\ldots} \gamma^k p_{k,\theta_t} (s) \sum_{a} \pi_{\theta_t}(a|s) Q_t(s,a) \frac{d}{d \theta} \log \pi_{\theta_t} (a|s)  + w_{\rm a}(t) \right), \]  where $w_{\rm a}(t)$ has zero expectation conditional on the entire past trajectory. Moreover, by the independent sampling by the actor and the critic, we have that  conditional on the past trajectory, $w_{\rm a}(t)$ and $w_{\rm c}(t)$ are independent. 

Let us further abstract this somewhat by defining 
\begin{eqnarray*} \hat{g}(\theta,Q) & =&   \sum_{s,t=0,\ldots} \gamma^t p_{t,\theta} (s) \sum_{a} \pi(a|s) Q(s,a) \frac{d}{d \theta} \log \pi_{\theta} (a|s) 
\end{eqnarray*} In terms of this notation, the policy gradient theorem can be reformulated as simply $\nabla V(\theta) = \hat{g}(\theta, Q_{\theta}^*)$. Further, we can rewrite the actor update as  
\[ \theta_{t+1} = \theta_{t} - \beta_t \left( \hat{g}(\theta_t, \Phi \omega_t) + w_{\rm a}(t) \right). \]

We summarize this discussion as the following proposition. 

\begin{proposition}[{\ao Reformulation of actor-critic}] The single-sample actor-critic update can be written as 
\begin{eqnarray} \theta_{t+1} & = & \theta_t - \beta_t \left( \hat{g}(\theta_t, \Phi \omega_t) + w_{\rm a}(t) \right) \label{eq:mainupdate} \\
\omega_{t+1} & = & \aor{P_{\Omega} \left( \omega_t - \alpha_t \left( \widetilde{\nabla} (\theta_t, \omega_t) + w_{\rm c}(t) \right) \right)}, \nonumber
\end{eqnarray} where if ${\mathcal F}_t$ is the $\sigma$-field generated by $w_{\rm a}(0), \ldots, w_{\rm a}(t-1), w_{\rm c}(0), \ldots, w_{\rm c}(t-1)$, then  \[ E [ w_{\rm a}(t) | {\mathcal F}_t ] = E[w_{\rm c}(t) | {\mathcal F}_t ] = 0,\] and further that $w_{\rm a}(t), w_{\rm c}(t)$ are conditionally independent given ${\mathcal F}_t$. Moreover, for all $\theta, \omega$, 
\begin{eqnarray} \nabla V(\theta) & = &   \hat{g}(\theta_, Q_{\theta}^*)  \label{eq:pgt2} \\ 
\widetilde{\nabla}(\theta,\omega)^T (\omega - \omega_{\theta}) & \geq & \frac{\mu}{2} ||\omega - \omega_{\theta}||^2,  \label{eq:critupdate1}
\end{eqnarray} for some strictly positive $\mu$. \label{prop:reformulation}
\end{proposition}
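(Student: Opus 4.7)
The plan is essentially one of bookkeeping: the proposition assembles decompositions and identities most of which have already been derived in the paragraphs preceding it. I would organize the proof by taking the two updates in turn, defining the noise terms as residuals around conditional expectations, and then quoting the two displayed inequalities that have already been established.

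For the critic, I would start from the rewrite (\ref{eq:omega:rewritten}) and compute the conditional expectation of $A_t \omega_t - b_t$ given $\mathcal{F}_t$. Because $\theta_t$ and $\omega_t$ are $\mathcal{F}_t$-measurable and the four-tuple $(s_t',a_t',s_t'',a_t'')$ is drawn from $\mu_{\theta_t}$ independently of the past, definitions (\ref{eq:abdef}) give $E[A_t\mid\mathcal{F}_t]=A_{\theta_t}$ and $E[b_t\mid\mathcal{F}_t]=b_{\theta_t}$, so the conditional expectation of the update direction is $A_{\theta_t}\omega_t - b_{\theta_t} = -\widetilde{\nabla}(\theta_t,\omega_t)$. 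Defining $w_{\rm c}(t)$ to be the residual then automatically yields $E[w_{\rm c}(t)\mid\mathcal{F}_t]=0$. The projection $P_\Omega$ would be folded into $w_{\rm c}(t)$ (or, equivalently, absorbed by observing via Assumption \ref{ass:projection} that $\omega_\theta$ lies in the interior of $\Omega$, deferring the treatment of boundary behavior to the main convergence argument).

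For the actor, I would start from (\ref{eq:theta:3}) and note that the single-sample direction $\phi(s_t,a_t)^T\omega_t \cdot \tfrac{d}{d\theta}\log \pi_{\theta_t}(a_t|s_t)$ is, conditional on $\mathcal{F}_t$, an unbiased draw whose distribution is determined by $(s_t,a_t)\sim \nu_{\theta_t}$. Pulling out the $\mathcal{F}_t$-measurable $\omega_t$ and comparing to the definition of $\hat{g}$ identifies the conditional expectation with $\hat{g}(\theta_t,\Phi\omega_t)$, and $w_{\rm a}(t)$ is again defined as the mean-zero deviation. Conditional independence of $w_{\rm a}(t)$ and $w_{\rm c}(t)$ given $\mathcal{F}_t$ is then immediate from the algorithm's specification that $(s_t,a_t)$ and $(s_t',a_t',s_t'',a_t'')$ are sampled independently of each other and of all past randomness.

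The two remaining claims are already established in the excerpt. The identity (\ref{eq:pgt2}) is simply the policy gradient theorem (\ref{eq:pgt}) rewritten in the $\hat{g}$ notation, and the inequality (\ref{eq:critupdate1}) is exactly the chain of equalities displayed just above the proposition: using $\widetilde{\nabla}(\theta,\omega_\theta)=0$ from (\ref{eq:thetazero}) to subtract a zero term cancels $b_\theta$ and leaves $-(\omega-\omega_\theta)^T A_\theta (\omega-\omega_\theta)$, which Assumption \ref{ass:explore} bounds below by $(\mu/2)\|\omega-\omega_\theta\|^2$. I expect no real obstacle here; the only point requiring a little care is the treatment of $P_\Omega$ in the critic update, since the proposition states the decomposition in unprojected form — but this is a notational convenience rather than a conceptual issue, given Assumption \ref{ass:projection}.
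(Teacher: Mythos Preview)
Your proposal is correct and mirrors exactly what the paper does: the proposition is stated in the paper as a summary of the discussion immediately preceding it, and your plan simply organizes that same discussion (residual definitions of $w_{\rm a}(t)$ and $w_{\rm c}(t)$, independence from the sampling specification, the policy gradient identity, and the chain using $\widetilde{\nabla}(\theta,\omega_\theta)=0$ together with Assumption~\ref{ass:explore}). Your observation about the projection $P_\Omega$ being suppressed in the proposition's statement is accurate and is indeed only a notational convenience in the paper, handled later via Assumption~\ref{ass:projection}.
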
 

We next introduce one more piece of notation. We will find it convenient to define 
\begin{equation} \label{eq:gdef} g(\theta, \omega) = \hat{g}(\theta, \Phi \omega), \end{equation} so that the actor-update can be written simply as 
\[ \theta_{t+1} = \theta_{t} - \beta_t \left( g(\theta_t,  \omega_t) + w_{\rm a}(t) \right). \]

Finally, we remark that by definition 
\begin{eqnarray*} 
 g(\aor{\theta}, \omega_{\theta}) -  \hat{g}(\aor{\theta}, Q_{\theta}^*)   & =&  \sum_{k} \gamma^k p_{k,\theta} \sum_{a} \pi_{\theta} (a|s) \left( \phi(s,a)^T \omega_{\theta} - Q_{\theta^*} \right) \frac{d}{d \theta} \log \pi_{\theta} (a|s) 
\end{eqnarray*} and consequently we can use Assumption \ref{ass:deltadef} to obtain that \begin{eqnarray} 
 ||g(\aor{\theta}, \omega_{\theta}) -  \hat{g}(\aor{\theta}, Q_{\theta}^*)||   & \leq &  \sum_{k} \gamma^k p_{k,\theta} \sum_{a} \pi_{\theta} (a|s) \left| \phi(s,a)^T \omega_{\theta} - Q_{\theta^*} \right| K \nonumber \\ 
 & \leq & \frac{K}{1-\gamma} E_{(s,a) \sim \nu_{\theta}} |\phi(s,a)^T \omega_{\theta} - Q_{\theta}^*(s,a)|  \leq  \frac{K}{1-\gamma} \delta \label{eq:deltabound}
\end{eqnarray} 

{\ao With all these observations in place, we next turn to the analysis of the dynamics of Proposition \ref{prop:reformulation}. 
We have structured this analysis in the following manner: 
\begin{enumerate}
    \item Section 5.3 contains technical results on regularity of the underlying functions, updates, and parameters
    \item Section 5.4 contains our convergence rate analysis and is divided to three parts:
    \begin{itemize}
        \item Section 5.4.1 focuses on the critic update;
        \item Section 5.4.2 focuses on the actor update;
        \item Section 5.4.3 includes the construction of the nonlinear small-gain theorem that, combining our previous findings, yields our main proof.
    \end{itemize}
\end{enumerate}
}

\subsection{First part of the proof: properties of the underlying functions, updates, and gradients} To analyze gradient descent one typically needs some assumptions on the underlying functions. These tend to involve continuity of various gradients and updates, boundedness and finite variance of noise, and so forth. As we have discussed in the previous section, actor-critic consists of two gradient-like updates; we will thus need to establish similar properties. We will do that in this section, and the main part of the proof itself, which will use these properties, will begin in the following section. 


We first show that the the mean direction of the critic update (before projection) is Lipschitz. Formally, we have the following. 

\begin{lemma}[{\ao Lipschitz-ness of critic update}]   There exists a constant $L_{\nabla} < \infty$ such that for all $\theta, \omega_1, \omega_2$, 
\[ ||\widetilde{\nabla}(\theta, \omega_1) - \widetilde{\nabla}(\theta, \omega_2)|| \leq L_{\nabla} ||\omega_1 - \omega_2||
\]  \label{prop:deltalip}
\end{lemma}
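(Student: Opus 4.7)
The plan is to exploit the linearity of $\widetilde{\nabla}(\theta,\omega)$ in $\omega$, which reduces the Lipschitz claim to a uniform operator-norm bound on $A_\theta$. Recall from the reformulation that $\widetilde{\nabla}(\theta,\omega) = -A_\theta \omega + b_\theta$, so immediately
\[
\widetilde{\nabla}(\theta,\omega_1) - \widetilde{\nabla}(\theta,\omega_2) = -A_\theta(\omega_1 - \omega_2),
\]
and hence $\|\widetilde{\nabla}(\theta,\omega_1) - \widetilde{\nabla}(\theta,\omega_2)\| \leq \|A_\theta\|_{\mathrm{op}} \, \|\omega_1 - \omega_2\|$. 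Thus it suffices to find a finite $L_\nabla$ such that $\sup_\theta \|A_\theta\|_{\mathrm{op}} \leq L_\nabla$.

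Next, I would bound $\|A_t\|_{\mathrm{op}}$ deterministically using Assumption~\ref{ass:features} on feature norms. From the definition~\eqref{eq:at}, $A_t$ is a rank-one matrix of the form $\phi(s_t',a_t')(\gamma \phi(s_t'',a_t'') - \phi(s_t',a_t'))^T$. Since each feature vector has norm at most one, the submultiplicativity of operator norms yields
\[
\|A_t\|_{\mathrm{op}} \leq \|\phi(s_t',a_t')\| \cdot \|\gamma \phi(s_t'',a_t'') - \phi(s_t',a_t')\| \leq 1 \cdot (\gamma + 1) \leq 2,
\]
with probability one.

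Finally, I would pass to the expectation. Since $A_\theta = E[A_t]$ where the expectation is over $(s_t',a_t',s_t'',a_t'') \sim \mu_\theta$, and the operator norm is convex, Jensen's inequality gives
\[
\|A_\theta\|_{\mathrm{op}} \leq E\bigl[\|A_t\|_{\mathrm{op}}\bigr] \leq 1 + \gamma \leq 2.
\]
This bound is uniform in $\theta$, so taking $L_\nabla = 1+\gamma$ (or simply $L_\nabla = 2$) completes the proof.

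I do not anticipate a real obstacle here: the argument is entirely routine once one recognizes that $\widetilde{\nabla}$ is affine in $\omega$ with a $\theta$-independent Lipschitz bound arising directly from the unit-norm feature assumption. The only mild care needed is to remember that the expectation defining $A_\theta$ is taken over $\mu_\theta$, so one should note that the deterministic bound on $\|A_t\|_{\mathrm{op}}$ holds for every realization and therefore survives taking expectation against any $\mu_\theta$.
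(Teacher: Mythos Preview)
Your proof is correct and follows essentially the same approach as the paper: both reduce the Lipschitz claim to bounding $\|A_\theta\|$ via the unit-norm feature assumption, arriving at $L_\nabla = 2$. Your version is slightly more explicit in invoking Jensen to pass from the pointwise bound on $\|A_t\|$ to the expectation, and you obtain the marginally sharper constant $1+\gamma$, but otherwise the arguments coincide.
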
 
\begin{proof} Indeed, by definition of $\widetilde{\nabla}(\theta, \omega)$, we have that 
\begin{eqnarray*} ||\widetilde{\nabla}(\theta, \omega_1) - \widetilde{\nabla}(\theta, \omega_2)|| & = & || A_{\theta} \omega_1 + b_{\theta} - A_{\theta} \omega_2 - b_{\theta} || \\ 
& \leq & ||A_{\theta}|| \cdot ||\omega_1 - \omega_2|| \\ 
& = & \left| \left|  E [\gamma \phi(s,a) \phi(s',a')^T - \phi(s,a)\phi(s,a)^T] \right| \right| \cdot ||\omega_1 - \omega_2||.
\end{eqnarray*} 

Now because we assumed in Assumption \ref{ass:features} that each $\phi(s,a)$ satisfies $||\phi(s,a)|| \leq 1$, we have that 
\[ \left| \left| E [\gamma \phi(s,a) \phi(s',a')^T - \phi(s,a)\phi(s,a)^T] \right| \right| \leq 2 .\] 
Thus we can take  $L_{\nabla}=2$. 
\end{proof} 

Next, at some point we will want to use the fact that none of the actor or critic updates ever leaves some compact set; while a proof can be done without this assertion, it certainly simplifies some of the arguments. To that end, our next two lemmas demonstrate that, by construction, the noises $w_{\rm c}(t), w_{\rm a}(t)$  have compact support. This turns out to be an immediate consequence of the projection of the critic update onto the compact set $\Omega$. 

 \begin{lemma}[{\ao Bounded support for critic noise}] The support of the random vector $w_{\rm c}(t)$ belongs to some compact set. \label{prop:criticsupport}
\end{lemma}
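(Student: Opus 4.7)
The plan is to write $w_{\rm c}(t)$ explicitly as a function of the bounded quantities that appear in the critic update, and then read off a uniform bound from the assumptions that have already been invoked.

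Starting from the two expressions for $\omega_{t+1}$ --- the raw TD update
\[ \omega_{t+1} = P_{\Omega}\!\left[ \omega_t - \alpha_t\bigl(-A_t \omega_t + b_t\bigr) \right] \]
(which is just a rewriting of Eq.~\eqref{eq:omega:rewritten}) and the decomposition
\[ \omega_{t+1} = P_{\Omega}\!\left[ \omega_t - \alpha_t\bigl(\widetilde{\nabla}(\theta_t,\omega_t) + w_{\rm c}(t)\bigr) \right] \]
used to define $w_{\rm c}(t)$ --- I would identify
\[ w_{\rm c}(t) = \bigl(-A_t \omega_t + b_t\bigr) - \widetilde{\nabla}(\theta_t,\omega_t) = (A_{\theta_t} - A_t)\omega_t + (b_t - b_{\theta_t}). \]
This exhibits $w_{\rm c}(t)$ as an affine function of $\omega_t$ with coefficients that are averages/single samples of the matrix/vector appearing in~\eqref{eq:at}, so the task reduces to bounding each piece.

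Next I would uniformly bound the three ingredients. For $A_t$ and $A_{\theta_t}$: Assumption~\ref{ass:features} gives $\|\phi(s,a)\| \le 1$ uniformly, so from the definition~\eqref{eq:at} we get $\|A_t\| \le 1+\gamma \le 2$, and the same bound passes through the expectation to give $\|A_{\theta_t}\| \le 2$. For $b_t$ and $b_{\theta_t}$: combining Assumption~\ref{ass:bcost} ($|c_t'| \le C_{\rm max}$) with the same feature bound yields $\|b_t\|, \|b_{\theta_t}\| \le C_{\rm max}$. Finally, $\omega_t$ lies in $\Omega$ for every $t\ge 1$ by construction (and we may assume without loss of generality that $\omega_0 \in \Omega$), so by Assumption~\ref{ass:projection} there is a finite $R_\Omega := \sup_{\omega \in \Omega}\|\omega\| < \infty$ with $\|\omega_t\| \le R_\Omega$ almost surely.

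Assembling these bounds via the triangle inequality gives
\[ \|w_{\rm c}(t)\| \le \|A_{\theta_t} - A_t\| \cdot \|\omega_t\| + \|b_t - b_{\theta_t}\| \le 4 R_\Omega + 2 C_{\rm max} \]
almost surely, uniformly in $t$. Hence the support of $w_{\rm c}(t)$ is contained in the closed ball of radius $4R_\Omega + 2C_{\rm max}$, which is compact. There is no real obstacle here: the projection in~\eqref{eq:omega:3} is precisely the mechanism that keeps $\omega_t$ in a compact set, and the remaining constants come directly from Assumptions~\ref{ass:bcost} and~\ref{ass:features}.
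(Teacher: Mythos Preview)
Your proof is correct and follows essentially the same approach as the paper: both identify $w_{\rm c}(t)$ as the difference between the single-sample quantity $-A_t\omega_t + b_t$ and its expectation $-A_{\theta_t}\omega_t + b_{\theta_t}$, then bound each piece using Assumption~\ref{ass:features} on the features, Assumption~\ref{ass:bcost} on the costs, and compactness of $\Omega$. Your version is in fact slightly tidier, working directly with norms rather than squared norms and obtaining the cleaner bound $4R_\Omega + 2C_{\rm max}$ in place of the paper's $\sqrt{32\|\omega_t\|^2 + 8C_{\rm max}^2}$.
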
 

\begin{proof} 
By definition of $w_{\rm c}(t)$, we have that 
\begin{eqnarray*}  w_{\rm c}(t) 
& = &  -(A_t - E[A_t]) \omega_t +  (b_t-E[b_t]) 
\end{eqnarray*} 
So 
\begin{eqnarray*}  ||w_{\rm c}||_2^2 & \leq & 2 ||A_t - E[A_t]||^2 ||\omega_t||^2 +  2 ||b_t - E[b_t]||^2  \\ 
& \leq & 2 (2 ||A_t||^2 + 2 ||E[A_t]||^2) ||\omega_t ||^2 + 2 (2 ||b_t||^2 + 2 ||E [b_t]||^2 ) 
\end{eqnarray*} 
Recall, however, that all costs are in $[-C_{\rm max}, C_{\rm max}]$ and that $||\phi(s,a)|| \leq 1$ for all state-action pairs $(s,a)$. We can plug this into Eq. (\ref{eq:at})   to obtain  
\[ ||w_{\rm c}(t)||_2^2 \leq 32 ||\omega_t||^2 + 8 C_{\rm max}^2
\] But since $\omega_t \in \Omega$, and $\Omega$ is a compact set, we obtain that $w_{\rm c}(t)$ indeed has compact support. 
\end{proof} 

\begin{lemma}[{\ao Bounded support for actor noise}] The support of the random vector $w_{\rm a}(t)$ belongs to a compact set.  \label{prop:wacompact}
\end{lemma}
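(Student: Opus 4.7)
The plan is to write $w_{\rm a}(t)$ explicitly as the difference between the one-sample actor direction (as it appears in Eq.~\eqref{eq:theta:3}) and its conditional expectation $\hat g(\theta_t,\Phi\omega_t)$, and then to bound each piece in norm uniformly using the assumptions. Because $w_{\rm a}(t)$ has been defined precisely so that the update in Eq.~\eqref{eq:mainupdate} matches Eq.~\eqref{eq:theta:3}, we have
\[
w_{\rm a}(t) \;=\; \bigl(\phi(s_t,a_t)^T \omega_t\bigr)\,\frac{d}{d\theta}\log\pi_{\theta_t}(a_t\mid s_t) \;-\; \hat g(\theta_t,\Phi\omega_t).
\]

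First I would bound the sample-based term. By Cauchy--Schwarz,
\[
\left\|\bigl(\phi(s_t,a_t)^T\omega_t\bigr)\frac{d}{d\theta}\log\pi_{\theta_t}(a_t\mid s_t)\right\|
\;\le\; \|\phi(s_t,a_t)\|\cdot\|\omega_t\|\cdot \left\|\frac{d}{d\theta}\log\pi_{\theta_t}(a_t\mid s_t)\right\|.
\]
By Assumption~\ref{ass:features} the first factor is at most $1$; because $\omega_t\in\Omega$ and $\Omega$ is compact by Assumption~\ref{ass:projection}, the second factor is at most some constant $M_\Omega := \sup_{\omega\in\Omega}\|\omega\|<\infty$; and by Assumption~\ref{ass:pi} the third factor is at most $K$. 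Thus the sample term is uniformly bounded by $K M_\Omega$ (a deterministic constant, independent of the realization).

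Next I would bound the conditional-mean term. Using the definition of $\hat g$ together with $Q_t(s,a)=\phi(s,a)^T\omega_t$ and the same three ingredients,
\[
\|\hat g(\theta_t,\Phi\omega_t)\| \;\le\; \sum_{k=0}^{\infty}\gamma^k \sum_{s,a} p_{k,\theta_t}(s)\pi_{\theta_t}(a\mid s)\,|\phi(s,a)^T\omega_t|\,K \;\le\; \frac{K M_\Omega}{1-\gamma},
\]
since $\sum_s p_{k,\theta_t}(s)=1$, $\sum_a\pi_{\theta_t}(a\mid s)=1$, $\|\phi(s,a)\|\le 1$, and $\sum_k\gamma^k=1/(1-\gamma)$. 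Combining the two bounds with the triangle inequality gives
\[
\|w_{\rm a}(t)\| \;\le\; K M_\Omega\!\left(1+\frac{1}{1-\gamma}\right),
\]
a fixed deterministic constant, so $w_{\rm a}(t)$ is supported in a closed Euclidean ball of that radius, which is compact.

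I do not foresee a genuine obstacle here; the proof is essentially a norm-bookkeeping exercise that leans on (i) compactness of $\Omega$ to control $\|\omega_t\|$, (ii) Assumption~\ref{ass:features} to control $\|\phi(s,a)\|$, and (iii) Assumption~\ref{ass:pi} to control $\|\nabla_\theta\log\pi_\theta(a\mid s)\|$. The only thing to be careful about is that $\omega_t$ really does lie in $\Omega$ (true by construction, since the update in Eq.~\eqref{eq:omega:3} projects onto $\Omega$), and that the infinite series defining $\hat g$ is absolutely convergent (handled by the geometric factor $\gamma^k$).
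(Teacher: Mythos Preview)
Your proof is correct and follows essentially the same approach as the paper's: write $w_{\rm a}(t)$ as the difference between the single-sample actor direction and its expectation $\hat g(\theta_t,\Phi\omega_t)$, then bound each term using the compactness of $\Omega$, the feature-norm bound from Assumption~\ref{ass:features}, and the score bound $K$ from Assumption~\ref{ass:pi}. Your constants are in fact slightly sharper than the paper's (you get $KM_\Omega$ for the sample term rather than $\tfrac{K}{1-\gamma}\|\Phi\|M_\Omega$), but the argument is the same.
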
 

\begin{proof} Indeed, $w_{\rm a}(t)$ is defined as the difference between the sum on the right-hand side of Eq. (\ref{eq:pgt}) and a single term of it  chosen at random so that its expectation is the entire sum. Inspecting Eq. (\ref{eq:pgt}), it follows that because $\omega_t$ lies in the compact set $\Omega$ and, by Assumption~\ref{ass:pi}, $|| \frac{d}{d\theta} \log \pi_{\theta}(a|s)|| \leq K$ for all $\theta, s,a$, we have that $w_{\rm a}(t)$ is the difference of two quantities each of which has norm at most  $$\frac{K}{1-\gamma} ||\Phi|| \sup_{\omega \in \Omega} ||\omega||.$$ Thus its support is bounded.
\end{proof}

Since we have shown that the vectors $w_{\rm a}(t), w_{\rm c}(t)$ have bounded support, we now introduce the notation 
\[ 
E \left[ ||w_{\rm a}(t)||_2^2 ~|~ {\mathcal F}_t  \right]  \leq  \sigma_a^2, ~~
E \left[ ||w_{\rm c}(t)||_2^2 ~|~ {\mathcal F}_t \right]  \leq  \sigma_c^2. 
\]  By the above lemmas, we have that $\sigma_{\rm a} < \infty$ and $\sigma_{\rm c} < \infty$. We will also define $\sigma_{\rm a'}^2$ through
\[ 
\sqrt{E \left[ ||w_{\rm a}(t)||_2^4 ~|~ {\mathcal F}_t  \right]}  \leq  \sigma_{\rm a'}^2
\] Again, this is well-defined as $w_{\rm a}(t)$ has compact support.

Next, we need to establish that the mean of the critic update is Lipschitz. This is done in the following lemma. We remind the reader that $g(\theta, \omega)$ is defined in Eq.~\eqref{eq:gdef}. 
\begin{lemma}[\ao{Lipschitz-ness of actor update}] For all $\theta, \omega_1, \omega_2$, we have that 
\[ ||g(\theta, \omega_1) - g(\theta, \omega_2)|| \leq L_g ||\omega_1 - \omega_2 ||, \] for some $L_g < \infty$.  \label{prop:lipg}
\end{lemma}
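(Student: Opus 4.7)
The plan is to exploit the fact that $g(\theta, \omega)$ is \emph{linear} in $\omega$ for each fixed $\theta$. Indeed, unrolling the definition
\[
g(\theta, \omega) = \hat{g}(\theta, \Phi\omega) = \sum_{s, t \geq 0} \gamma^{t} p_{t,\theta}(s) \sum_{a} \pi_\theta(a|s)\, \phi(s,a)^T \omega \,\frac{d}{d\theta}\log\pi_\theta(a|s),
\]
the quantity $\phi(s,a)^T\omega$ is the only place $\omega$ enters, and linearly. Consequently the difference $g(\theta,\omega_1) - g(\theta,\omega_2)$ is obtained by replacing $\omega$ with $\omega_1 - \omega_2$ inside the sum, and my job reduces to bounding this expression in norm by a constant multiple of $\|\omega_1-\omega_2\|$.

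First I would apply the triangle inequality to push the norm inside the double sum. For each term I would use Assumption \ref{ass:pi} to bound $\|\frac{d}{d\theta}\log\pi_\theta(a|s)\| \leq K$, then Cauchy--Schwarz together with Assumption \ref{ass:features} (which says $\|\phi(s,a)\| \leq 1$) to bound $|\phi(s,a)^T(\omega_1-\omega_2)| \leq \|\omega_1-\omega_2\|$. What is left inside the sum is just $\gamma^t p_{t,\theta}(s)\pi_\theta(a|s)$, which sums to $1$ over $s,a$ for each fixed $t$, so the double sum collapses to $\sum_{t \geq 0} \gamma^t = 1/(1-\gamma)$. This yields
\[
\|g(\theta,\omega_1) - g(\theta,\omega_2)\| \leq \frac{K}{1-\gamma}\|\omega_1 - \omega_2\|,
\]
so one may take $L_g = K/(1-\gamma)$.

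There is no real obstacle here: linearity in $\omega$ does all the heavy lifting, and the assumption that $\|\phi(s,a)\|\le 1$ together with the uniform bound on $\|\nabla_\theta \log \pi_\theta\|$ is precisely what is needed. The only thing one should be careful about is that the Lipschitz constant obtained is uniform in $\theta$, but since the only $\theta$-dependent quantities are the probabilities $p_{t,\theta}(s)$ and $\pi_\theta(a|s)$, which just renormalize and integrate out, this uniformity is automatic.
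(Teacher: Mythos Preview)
Your argument is correct and is essentially the same as the paper's: both exploit the linearity of $g(\theta,\omega)$ in $\omega$, bound the score function by $K$ via Assumption~\ref{ass:pi}, and sum the geometric series in $t$ to pick up the factor $1/(1-\gamma)$. The only cosmetic difference is that the paper bounds $|\phi(s,a)^T(\omega_1-\omega_2)|$ via the matrix norm $\|\Phi\|$, obtaining $L_g = \frac{K}{1-\gamma}\|\Phi\|$, whereas you use $\|\phi(s,a)\|\le 1$ directly from Assumption~\ref{ass:features} and get the slightly sharper constant $L_g = \frac{K}{1-\gamma}$; this distinction is immaterial for the lemma as stated.
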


\begin{proof} Indeed, 
\[  g(\theta, \omega_1) - g(\theta, \omega_2) = 
\sum_{s,t=0,\ldots} \gamma^t p_{t,\theta} (s) \sum_{a}  \pi_{\theta}(s,a) \phi(s,a)^T (\omega_1 - \omega_2) \frac{d}{d \theta} \log \pi_{\theta}(a|s) 
\] However, a convex combination of a collection of vectors has norm upper bounded by the largest of the norms of these vectors, so that  
\[ || g(\theta, \omega_1) - g(\theta, \omega_2) || \leq \frac{K}{1-\gamma} ||\Phi|| \cdot ||\omega_1 - \omega_2 ||  \] 

\end{proof} 

The following is a technical lemma which will be useful for us. 

\begin{lemma} Fix two vectors $u$ and $v$ and suppose $$q_{\theta} = u^T (I-\gamma P_{\theta})^{-1} v.$$ 
Then $\left| \left| \nabla_{\theta} q_{\theta} \right| \right| \leq L_q$, for some $L_q<\infty$ that does not depend on $\theta$. \label{lemm:otherinvderiv}
\end{lemma}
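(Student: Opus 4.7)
The plan is to differentiate the matrix inverse using the standard identity
\[
\frac{\partial}{\partial \theta_i} (I - \gamma P_{\theta})^{-1} \;=\; \gamma\, (I - \gamma P_{\theta})^{-1} \left( \frac{\partial P_{\theta}}{\partial \theta_i} \right) (I - \gamma P_{\theta})^{-1},
\]
so that the $i$-th partial derivative of $q_\theta$ is
\[
\frac{\partial q_\theta}{\partial \theta_i} \;=\; \gamma\, u^T (I - \gamma P_{\theta})^{-1} \left( \frac{\partial P_{\theta}}{\partial \theta_i} \right) (I - \gamma P_{\theta})^{-1} v.
\]
Once this is in hand, bounding $\|\nabla_\theta q_\theta\|$ reduces to (a) a uniform-in-$\theta$ bound on the resolvent $(I-\gamma P_\theta)^{-1}$, and (b) a uniform-in-$\theta$ bound on each $\partial P_\theta/\partial \theta_i$.

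For (a), I would use that $P_\theta$ is a row-stochastic matrix on the finite set of state-action pairs. Thus $\|P_\theta\|_\infty \leq 1$, which yields $\|(I-\gamma P_\theta)^{-1}\|_\infty \leq \sum_{k\geq 0} \gamma^k = (1-\gamma)^{-1}$ via the Neumann series. Passing from the induced $\ell_\infty$-norm to the $\ell_2$-norm costs only a factor that depends on the (fixed) number of state-action pairs, so we obtain a bound of the form $\|(I-\gamma P_\theta)^{-1}\|_2 \leq M_1(1-\gamma)^{-1}$ uniformly in $\theta$, for a constant $M_1$ depending only on the size of the MDP.

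For (b), the entries of $P_\theta$, indexed by pairs $((s,a),(s',a'))$, are $P(s'|s,a)\,\pi_\theta(a'|s')$. Differentiating in $\theta_i$ gives $P(s'|s,a)\,\partial_{\theta_i}\pi_\theta(a'|s')$, whose absolute value is bounded by $K'$ via Assumption~\ref{ass:pi}. Since the matrix has finitely many entries, this yields a uniform bound $\|\partial P_\theta/\partial \theta_i\|_2 \leq M_2 K'$ for a dimension-dependent constant $M_2$.

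Combining these with Cauchy-Schwarz gives
\[
\left| \frac{\partial q_\theta}{\partial \theta_i} \right| \;\leq\; \gamma\, \|u\|\, \|v\|\, \|(I-\gamma P_\theta)^{-1}\|_2^2\, \left\| \frac{\partial P_\theta}{\partial \theta_i} \right\|_2 \;\leq\; \frac{\gamma\, M_1^2\, M_2\, K'\, \|u\|\, \|v\|}{(1-\gamma)^2},
\]
and summing over the (fixed, finite) coordinates of $\theta$ yields the desired bound $\|\nabla_\theta q_\theta\|\leq L_q$ independent of $\theta$. I do not anticipate any real obstacle here: the argument is a straightforward bookkeeping of norms, with the only mildly delicate point being the choice of matrix norm for the resolvent, which is handled cleanly by exploiting row-stochasticity in $\ell_\infty$ and then converting to $\ell_2$ using finite-dimensionality.
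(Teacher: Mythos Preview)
Your proposal is correct and follows essentially the same approach as the paper: differentiate the resolvent via the standard matrix-inverse derivative formula, bound $(I-\gamma P_\theta)^{-1}$ uniformly in $\theta$ using row-stochasticity of $P_\theta$ in the $\ell_\infty$-norm (then pass to the $2$-norm by finite-dimensionality), and bound $\partial P_\theta/\partial\theta_i$ using Assumption~\ref{ass:pi}. The only difference is that you are somewhat more explicit about the constants and the norm conversions than the paper is.
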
 

\begin{proof} 
Using the well-known formula for derivative of the matrix inverse (see \cite[Eq.~(14.46)]{vinod2011hands}),  
\begin{equation} \label{eq:partderiv} \frac{\partial q_{\theta}}{\partial \theta_i} = u^T (I-\gamma P_{\theta})^{-1}  \left[ -\gamma \frac{\partial P_{\theta}}{\partial \theta_i} \right] (I - \gamma P_{\theta})^{-1} v
\end{equation} We next argue that the $2$-norm of each term in the product on the right-hand side can be bounded independently of $\theta$. Indeed, 
$||(I-\gamma P_{\theta})x||_{\infty} \geq (1-\gamma) ||x||_{\infty}$, as $||(I-\gamma P_{\theta})^{-1}||_{\infty}$ is  uniformly bounded over $\theta$ (and so is the $2$-norm).  As for the term in brackets, we have that by Assumption \ref{ass:pi} the quantities $||\frac{d}{d \aor{\theta}} \pi_{\theta}(a|s)||$ are bounded independently of $\theta$, implying any norm one takes of the term in brackets is bounded independently of $\theta$ as well. 
\end{proof} 

Next, to derive any kind of guarantee on minimizing $V$, we need to make some assumptions on this function. We already know it is smooth from the policy gradient theorem, provided that Assumption~\ref{ass:pi} holds. It is standard to have a bound asserting that its gradient cannot change too quickly.

\begin{proposition}[Smooth objective] There exists some $L_V<\infty$ such that the function $V_{\theta}$ has $L_V$-Lipschitz gradient. \label{prop:smoothvalue}
\end{proposition}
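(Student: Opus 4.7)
The plan is to bound $\|\nabla^2 V_\theta\|$ uniformly in $\theta$, which immediately yields that $\nabla V_\theta$ is Lipschitz with constant $L_V$ equal to that uniform bound, by the mean-value theorem. For this I would use the closed-form expression for $V_\theta$ rather than the policy-gradient representation. Let $\tilde\eta_\theta(s,a) = \eta(s)\pi_\theta(a|s)$ and let $c$ be the vector of one-step costs $c(s,a)$. Since $Q_\theta^\ast$ satisfies the Bellman equation $Q_\theta^\ast = c + \gamma P_\theta Q_\theta^\ast$, we have $Q_\theta^\ast = (I-\gamma P_\theta)^{-1}c$, and consequently
\[ V_\theta \;=\; \tilde\eta_\theta^{\,T}\,(I-\gamma P_\theta)^{-1}\,c. \]

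\textbf{Execution.} Each entry of $\tilde\eta_\theta$ and of $P_\theta$ is a constant (a known transition probability, a known $\eta(s)$, or $1$) multiplied by one factor $\pi_\theta(a|s)$, so by Assumption~\ref{ass:pi} both $\tilde\eta_\theta$ and $P_\theta$ are twice continuously differentiable in $\theta$ with first and second partial derivatives uniformly bounded by constants depending only on $K'$, $K''$ and the number of state-action pairs; the vector $c$ is $\theta$-independent and has entries bounded by $C_{\rm max}$ by Assumption~\ref{ass:bcost}. Since $P_\theta$ is row-stochastic and $\gamma\in(0,1)$, the resolvent $(I-\gamma P_\theta)^{-1}$ has $\infty$-norm at most $1/(1-\gamma)$, and hence spectral norm uniformly bounded in $\theta$ (this is the same uniform bound used in the proof of Lemma~\ref{lemm:otherinvderiv}). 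Differentiating the closed-form expression twice, and applying the matrix-inverse derivative identity~\eqref{eq:partderiv} as needed, one obtains $\nabla^2 V_\theta$ as a finite sum of products, each factor of which is one of $\tilde\eta_\theta$, $c$, $(I-\gamma P_\theta)^{-1}$, a first or second $\theta$-derivative of $\tilde\eta_\theta$, or a first or second $\theta$-derivative of $P_\theta$. Each such factor has a uniform-in-$\theta$ bound by the preceding observations, so multiplying these bounds yields a finite $L_V$ with $\|\nabla^2 V_\theta\|\leq L_V$ for every $\theta$, which is the claim.

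\textbf{Main obstacle.} There is no conceptual difficulty; the only work is the product-rule bookkeeping when differentiating a triple product containing one matrix-inverse factor twice (which generates a fixed, modest number of summands). The key structural ingredient that makes the bounds uniform in $\theta$ is exactly the one already exploited in Lemma~\ref{lemm:otherinvderiv}: the uniform operator-norm bound on $(I-\gamma P_\theta)^{-1}$ coming from $\gamma<1$ and the row-stochasticity of $P_\theta$, combined with the uniform constants $K'$, $K''$, $C_{\rm max}$ from Assumptions~\ref{ass:pi} and~\ref{ass:bcost}.
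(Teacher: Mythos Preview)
Your proposal is correct, and it takes a somewhat different route from the paper. The paper starts from the policy-gradient representation of $\nabla V_\theta$ and argues that the true $Q$-values are Lipschitz in $\theta$ via the identity $Q_\theta^\ast(s,a) = {\bf e}_{s,a}^T (I-\gamma P_\theta)^{-1} R$ together with Lemma~\ref{lemm:otherinvderiv}. You instead bound $\|\nabla^2 V_\theta\|$ directly from the closed form $V_\theta = \tilde\eta_\theta^{\,T}(I-\gamma P_\theta)^{-1}c$, handling all $\theta$-dependent factors in one pass via the product rule and the matrix-inverse derivative identity. Both arguments rest on the same structural ingredient---the uniform operator-norm bound on $(I-\gamma P_\theta)^{-1}$ exploited in Lemma~\ref{lemm:otherinvderiv}---together with the uniform derivative bounds from Assumption~\ref{ass:pi}. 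Your approach is arguably tidier: the policy-gradient expression has several $\theta$-dependent factors ($p_{t,\theta}$, $\pi_\theta$, $Q_\theta^\ast$, $\nabla\log\pi_\theta$), and showing Lipschitzness of their product requires bounding each one and its $\theta$-variation separately, whereas your Hessian bound dispatches all of this uniformly in one computation.
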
 

This follows from Theorem 3 of \cite{pirotta2015policy}.

We next begin a sequence of lemmas and propositions whose ultimate goal is to show that the quantity $\omega_{\theta}$ (the ultimate limit of the TD update when $\theta$ is fixed) is a twice differentiable function of $\theta$ with a uniform upper bound on its Hessian. 

As a first step towards that, we will need to obtain upper bounds on the derivatives of the matrix $A_{\theta}$ (or rather, on the derivative of its inverse). Recall that matrix is obtained by sampling states $s_t',a_t',s_t'',a_t''$ from the distributed $\mu_{\theta}$ (see Algorithm~\ref{mainalg}). Our first proposition shows this distribution changes in a Lipschitz manner as a function of  $\theta$. We translate this into a bound on the derivatives of the entries of the matrix $A_{\theta}$.

\begin{lemma}[{\ao Uniform boundedness of derivatives and Hessians of $A_{\theta}$}] There exist constants $L_A', L_A'', L_b', L_b''$ not depending on $\theta$  such that, for all $k,l$, 
\[ 
\left| \frac{\partial [A_\theta]_{ij}}{\partial \theta_k} \right|  \leq  L_A',  
~\left| \frac{\partial^2 [A_\theta]_{ij}}{\partial \theta_k \theta_l} \right|  \leq  L_A'', 
~\left| \frac{\partial [b_{\theta}]_i}{\partial \theta_k}
\right| 
 \leq  L_b', 
~\left| \frac{\partial^2 [b_{\theta}]_i}{\partial \theta_k \theta_l}
\right| 
 \leq  L_b'.
\]  \label{lemm:abderivbounded}
\end{lemma}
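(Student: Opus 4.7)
The plan is to write $A_\theta$ and $b_\theta$ out explicitly as sums over state-action tuples. Using the law of total expectation with respect to $\mu_\theta$,
\[
[A_\theta]_{ij} = \sum_{s',a',s'',a''} \rho_\theta(s')\pi_\theta(a'|s')P(s''|s',a')\pi_\theta(a''|s'')\,\phi_i(s',a')\bigl(\gamma\phi_j(s'',a'')-\phi_j(s',a')\bigr),
\]
and an analogous sum (without a $P$ factor and without $\pi_\theta(a''|s'')$) for $[b_\theta]_i$, with $c(s',a')$ in place of the bracketed feature difference. The features, transition probabilities, and expected costs are all fixed quantities bounded by constants (Assumptions \ref{ass:bcost} and \ref{ass:features}), so the entire dependence on $\theta$ sits in the two factors $\rho_\theta$ and $\pi_\theta$. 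Assumption \ref{ass:pi} already supplies uniform bounds on the first and second partials of $\pi_\theta$, so the task reduces to proving the same for the stationary distribution $\rho_\theta$ on state-action pairs.

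To get bounds on $\partial_k\rho_\theta$ and $\partial_k\partial_l\rho_\theta$, note first that the state-action transition matrix $P_\theta$ has entries of the form $P(s''|s',a')\pi_\theta(a''|s'')$, and therefore inherits uniformly bounded first and second derivatives in $\theta$ from Assumption \ref{ass:pi}. Next, differentiate the stationary identities $\rho_\theta^T(I-P_\theta)=0$ and $\rho_\theta^T\mathbf{1}=1$ to obtain the linear system
\[
(\partial_k\rho_\theta)^T(I-P_\theta)=\rho_\theta^T(\partial_k P_\theta),\qquad (\partial_k\rho_\theta)^T\mathbf{1}=0.
\]
I would invert this using the fundamental matrix $H_\theta=\sum_{t=0}^\infty (P_\theta^t-\mathbf{1}\rho_\theta^T)$. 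The uniform geometric mixing in Assumption \ref{ass:stationary} implies $\|P_\theta^t-\mathbf{1}\rho_\theta^T\|\leq C'\lambda^t$ uniformly in $\theta$, so $H_\theta$ converges and $\|H_\theta\|$ is bounded by a constant independent of $\theta$. This yields the closed form $(\partial_k\rho_\theta)^T = \rho_\theta^T(\partial_k P_\theta)H_\theta$, from which a uniform bound on the first derivatives follows immediately. Differentiating once more produces
\[
(\partial_k\partial_l\rho_\theta)^T = \bigl[(\partial_l\rho_\theta)^T(\partial_kP_\theta)+\rho_\theta^T(\partial_k\partial_lP_\theta)+(\partial_k\rho_\theta)^T(\partial_lP_\theta)\bigr]H_\theta + \rho_\theta^T(\partial_k P_\theta)(\partial_l H_\theta),
\]
where $\partial_l H_\theta$ admits a similar representation in terms of $H_\theta$ and $\partial_l P_\theta$ (obtained by differentiating the resolvent-like identity $H_\theta(I-P_\theta)=I-\mathbf{1}\rho_\theta^T$). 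All factors are uniformly bounded, giving a uniform bound on the second derivatives.

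Once $\rho_\theta$ and $\pi_\theta$ have uniformly bounded first and second partials, apply the product rule to the explicit sums for $[A_\theta]_{ij}$ and $[b_\theta]_i$: each summand is a product of finitely many factors that are bounded together with their first two partials, and the number of summands is bounded by the (fixed, finite) size of the state-action space. This yields the four desired constants $L_A',L_A'',L_b',L_b''$.

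The main obstacle is the bound on derivatives of $\rho_\theta$: once one shows that the fundamental matrix $H_\theta$ is uniformly norm-bounded under the uniform mixing assumption, everything else is a routine product-rule computation. The uniformity in Assumption \ref{ass:stationary}, rather than pointwise mixing for each $\theta$, is precisely what makes the bound on $\|H_\theta\|$ independent of $\theta$ and therefore lets all the constants in the lemma be chosen uniformly.
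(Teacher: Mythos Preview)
Your proposal is correct and follows the same overall structure as the paper: write $A_\theta$ and $b_\theta$ explicitly as finite sums over state-action tuples, observe that the $\theta$-dependence enters only through $\pi_\theta$ and the stationary distribution $\rho_\theta$, invoke Assumption~\ref{ass:pi} for the former, and then establish uniform bounds on the first two derivatives of $\rho_\theta$ using the uniform mixing condition of Assumption~\ref{ass:stationary}.

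The one substantive difference is in how the bounds on $\partial_k\rho_\theta$ and $\partial_k\partial_l\rho_\theta$ are obtained. The paper simply cites \cite{heidergott2003taylor} (Section~4 together with Remark~6 there) for this step. You instead sketch a self-contained argument via the deviation matrix $H_\theta=\sum_{t\ge 0}(P_\theta^t-\mathbf{1}\rho_\theta^T)$, deriving the perturbation formula $(\partial_k\rho_\theta)^T=\rho_\theta^T(\partial_kP_\theta)H_\theta$ and iterating. Your route is more elementary and keeps the argument in-house; the paper's citation is shorter but outsources exactly the point you flag as the ``main obstacle.'' Both approaches rest on precisely the same two ingredients---uniform mixing for a $\theta$-independent bound on $\|H_\theta\|$, and the derivative bounds on $\pi_\theta$ for bounds on $\partial_kP_\theta$---so neither buys anything the other does not in terms of assumptions.
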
 

\begin{proof} Let us use the  notation $\rho_{\theta}(s,a)$ for the stationary distribution of state $s$ and action $a$ when following policy $\pi_{\theta}$. We have that 
\[ [A_{\theta}]_{ij} =\sum_{s',a',s'',a''} \rho_{\theta}(s',a') P(s''|s',a') \pi_{\theta} (a''|s'') \phi(s',a') \left( \gamma \phi(s'',a'') - \phi(s',a') \right).
\] There are only two terms in this sum depending on $\theta$. It is thus immediate that the theorem follows if we can bound the absolute values of the quantities  \begin{eqnarray*} \pi_{\theta}(a|s)  , \frac{\partial \pi_{\theta}(a|s)}{\partial \theta_j}, \frac{\partial^2 \pi_{\theta}(a|s)}{\partial \theta_j \partial \theta_k} \\ \rho_{\theta}(s,a) , \frac{\partial \rho_{\theta}(a|s)}{\partial \theta_j}, \frac{\partial^2 \rho_{\theta}(a|s)}{\partial \theta_j \partial \theta_k}
\end{eqnarray*} independently of $\theta$, for all $i,j,k$. 

Of course, both $\pi_{\theta}(a|s)$ and $\rho_{\theta}(a|s)$ are upper bounded by one. Now for the derivatives  and second derivatives of $\pi_{\theta}$, boundedness follows by Assumption \ref{ass:pi}. Finally, for the stationary 
distribution $\rho_{\theta}$, boundedness of all the quantities above  follows from \cite{heidergott2003taylor}, specifically from the discussion in the beginning of Section 4 of that paper, when put together with the earlier Remark 6 in the same paper. 

Note that here we need to use both our assumptions the uniform upper bound on the first two derivatives of $\pi_{\theta}(a|s)$ in Assumption \ref{ass:pi} as well as the uniform mixing condition of Assumption \ref{ass:stationary}, since both of these are used by \cite{heidergott2003taylor} in their arguments.

The proof of the upper bound on the derivatives and second derivatives of the entries of $b_{\theta}$ is similar. 
\end{proof}

We next establish a uniform bound on the how big $A_{\theta}^{-1}$ can get.  

\begin{lemma}[{\ao Uniform boundedness of $A_{\theta}^{-1}$}] \label{lem:invbound} 
\[ \sup_{\theta} ||A_{\theta}^{-1} || < \mu^{-1}.
\] 
\end{lemma}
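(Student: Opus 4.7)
The plan is to convert the coercivity bound in Assumption \ref{ass:explore} directly into a lower bound on the smallest singular value of $A_\theta$, uniform in $\theta$. Assumption \ref{ass:explore} states $x^T A_\theta x \le -\mu/2$ for every unit vector $x$ and every $\theta$, which rescales to $-x^T A_\theta x \ge (\mu/2)\|x\|^2$ for arbitrary $x$. By Cauchy--Schwarz, $-x^T A_\theta x \le \|x\|\cdot \|A_\theta x\|$, so combining the two inequalities yields
\[ \|A_\theta x\| \ge (\mu/2)\,\|x\| \qquad \text{for every } x \text{ and every } \theta. \]

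This lower bound on $\|A_\theta x\|$ shows $A_\theta$ has trivial kernel, and since $A_\theta$ is a square matrix (its dimension equals the number of features), it is invertible. Setting $y = A_\theta x$ gives $\|A_\theta^{-1} y\| \le (2/\mu)\,\|y\|$ for all $y$, so $\|A_\theta^{-1}\|\le 2/\mu$ uniformly in $\theta$, which is the claim (up to bookkeeping of the factor of two).

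The main ``obstacle'' here is not conceptual — the argument is essentially a one-line Cauchy--Schwarz manipulation, and notably it does not need any of the analytic machinery developed in Lemmas \ref{prop:deltalip}--\ref{lemm:abderivbounded}. The only delicate point is the constant: the direct route from Assumption \ref{ass:explore} produces $2\mu^{-1}$, while the lemma as written asserts $\mu^{-1}$; the mismatch is absorbed either by replacing $\mu$ with $2\mu$ throughout or equivalently by reading the exploration assumption as $x^T A_\theta x \le -\mu\|x\|^2$. The role of this lemma downstream is to give the uniform control on $\omega_\theta$ and its derivatives needed by subsequent lemmas, via the identity $\omega_\theta = A_\theta^{-1} b_\theta$ which follows from $\widetilde\nabla(\theta,\omega_\theta)=0$ together with Proposition \ref{prop:reformulation}; combined with Lemma \ref{lemm:abderivbounded}, it will feed into uniform bounds on $\nabla_\theta \omega_\theta$ and $\nabla^2_\theta \omega_\theta$ by differentiating the defining relation $A_\theta \omega_\theta = b_\theta$.
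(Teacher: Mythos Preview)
Your proof is correct and is essentially the same argument as the paper's: the paper's step ``$-1 \le x^T y/\|y\|$'' (with $y = A_\theta x$, $\|x\|=1$) is exactly Cauchy--Schwarz, and from $-x^T A_\theta x \ge \mu/2$ both you and the paper conclude $\|A_\theta x\| \ge \mu/2$ for unit $x$, hence $\|A_\theta^{-1}\| \le 2/\mu$. You are also right about the constant: the paper's own proof, applied with $\lambda = \mu/2$ from Assumption~\ref{ass:explore}, yields $2\mu^{-1}$ rather than the $\mu^{-1}$ written in the lemma statement, so the factor-of-two discrepancy is present in the original as well and is immaterial for the downstream use.
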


\begin{proof}
We prove a more  general claim: If $\lambda_{\rm max}\frac{1}{2}(A+A^T) \leq - \lambda \leq 0$, then $||A^{-1}|| \leq \lambda^{-1}$.
By assumption, we have that
$
\frac{1}{2}x^T(A+A^T)x\leq -\lambda, 
$
for any unit vector $ x\in \mathbb{R}^n $. This implies that 
$x^Ty\leq -\lambda$, where $ y=Ax$ and~$x$ is of unit norm. On the other hand, since $\|x\|=1$, we have that
$-1 \leq \frac{x^Ty}{\|y\|}$. Using the fact that 
$-x^Ty\geq \lambda$, we conclude that $ \|y\|\geq \lambda$; since $ \lambda\geq 0 $, we have that $ x^TAA^Tx\geq \lambda^2$, for any unit vector $ x$. In particular, 
\[
\subscr{\lambda}{min}(AA^T)=\min_{\|x\|\neq 0, \|x\|=1} x^TAA^Tx \geq \lambda^2.
\]
Since $\subscr{\lambda}{max}((AA^T)^{-1})=
\subscr{\lambda}{min}(AA^T)^{-1}
$, we conclude that 
\[
\|A^{-1}\|=\sqrt{\subscr{\lambda}{max}((AA^T)^{-1})}\leq \frac{1}{\lambda}.
\]
\end{proof}

Next, we state as a lemma the fact that the limit point of TD is Lipschitz. This fact is well-known, and is an immediate consequence of the standard upper bound 
\begin{equation} \label{eq:invderiv2}
\left| \left| \frac{\partial A(\theta)^{-1}}{\partial \theta_j} \right| \right|_p \leq 
|| A(\theta)^{-1} ||_p^2 \cdot  \left| \left| \frac{\partial A(\theta)}{\partial \theta_j} \right| \right|_p
\end{equation}
for the norm of a matrix inverse (see \cite[Eq.~(14.46)]{vinod2011hands}). For a concrete reference, the next lemma is Proposition 4.4 of \cite{wu2020finite}.

\begin{lemma}[{\ao Lipschitz continuity of the TD fixed point}] \label{prop:omegalip} There is a constant $L_{\omega}$ such that 
\[ ||\omega_{\theta_1} - \omega_{\theta_2} || \leq L_{\omega} || \theta_1 - \theta_2 || \] 
\end{lemma}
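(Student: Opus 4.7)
The plan is to exploit the identity $\omega_\theta = A_\theta^{-1} b_\theta$, which holds because Eq.~\eqref{eq:thetazero} tells us $-A_\theta \omega_\theta + b_\theta = 0$ and Lemma \ref{lem:invbound} guarantees $A_\theta$ is invertible. Once this closed-form is in hand, Lipschitz continuity reduces to bounding $\|\nabla_\theta \omega_\theta\|$ uniformly and applying the mean value theorem.

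First I would differentiate entrywise. Using the standard formula for the derivative of a matrix inverse,
\[
\frac{\partial \omega_\theta}{\partial \theta_k} \;=\; -A_\theta^{-1}\,\frac{\partial A_\theta}{\partial \theta_k}\,A_\theta^{-1} b_\theta \;+\; A_\theta^{-1}\,\frac{\partial b_\theta}{\partial \theta_k}.
\]
Now I would assemble the uniform bounds on each factor. Lemma \ref{lem:invbound} gives $\|A_\theta^{-1}\| \leq \mu^{-1}$ independent of $\theta$. Lemma \ref{lemm:abderivbounded} gives componentwise bounds on $\partial A_\theta/\partial \theta_k$ and $\partial b_\theta/\partial \theta_k$, which translate into operator-norm/vector-norm bounds (say $C_A'$ and $C_b'$) depending only on the fixed dimension of the feature space, not on $\theta$. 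Finally, $\|b_\theta\| \leq C_{\rm max}$ by Assumption \ref{ass:bcost} together with Assumption \ref{ass:features} (since $b_\theta = -E[c_t'\phi(s_t',a_t')]$ and each feature vector has norm at most one). Combining these yields
\[
\left\| \frac{\partial \omega_\theta}{\partial \theta_k} \right\| \;\leq\; \mu^{-2}\, C_A'\, C_{\rm max} \;+\; \mu^{-1}\, C_b',
\]
and summing over coordinates (again a fixed-dimensional operation) gives a uniform bound on $\|\nabla_\theta \omega_\theta\|$. The mean value theorem then yields the desired Lipschitz constant $L_\omega$.

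The only subtle point, and the one I would take care of up front, is justifying that $\omega_\theta$ is actually differentiable in $\theta$ in the first place. This follows because $\theta \mapsto A_\theta$ and $\theta \mapsto b_\theta$ are differentiable — their entries are smooth functions of $\pi_\theta$ and of $\rho_\theta$, and the differentiability of the latter with respect to $\theta$ is exactly what underlies Lemma \ref{lemm:abderivbounded} (relying on Assumption \ref{ass:pi} and Assumption \ref{ass:stationary} via the Taylor-series results of \cite{heidergott2003taylor}) — and matrix inversion is smooth at invertible points, with invertibility everywhere guaranteed by Lemma \ref{lem:invbound}. Once differentiability is in hand, the bounding calculation above is routine and finishes the proof.
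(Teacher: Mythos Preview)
Your proposal is correct and follows essentially the same approach as the paper: both arguments start from the closed form $\omega_\theta = A_\theta^{-1} b_\theta$, differentiate via the matrix-inverse derivative formula, and then bound each factor uniformly using Lemma~\ref{lem:invbound} and Lemma~\ref{lemm:abderivbounded}. The only cosmetic difference is that the paper writes the derivative out entrywise (bounding $|\partial w_\theta(i)/\partial \theta_j|$) whereas you work directly at the vector level; your explicit bound $\|b_\theta\|\le C_{\max}$ is a small addition that the paper leaves implicit.
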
 

It turns out we will need a more than this; specifically, we need to argue that the second of the TD limit is independent of $\theta$. This is done in the following lemma. 

\begin{lemma}[{\ao Bounded curvature of the TD fixed point}] There is some quantity $\lambda_i$ independent of $\theta$ such that 
\[ \sup_{\theta} \lambda_{\rm max}(\nabla^2 w_{\theta}(i)) \leq \lambda_i. \] \label{lemma:twicediff}
\end{lemma}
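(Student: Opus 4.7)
The plan is to differentiate the identity $\omega_\theta = A_\theta^{-1} b_\theta$ twice with respect to $\theta$, and bound every factor that appears via the already-established lemmas. Since the dimension of $\theta$ is fixed, a uniform entrywise bound on $\nabla^2 \omega_\theta(i)$ translates immediately into a uniform bound on its largest eigenvalue (e.g.\ through $\lambda_{\max}(H) \leq \|H\|_F$).

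Starting from Eq.~\eqref{eq:wderiv}, differentiating in $\theta_l$ yields
\[
\frac{\partial^2 \omega_\theta(i)}{\partial \theta_j \partial \theta_l}
= \sum_k \Bigl(
\frac{\partial^2 [A_\theta^{-1}]_{ik}}{\partial \theta_j \partial \theta_l}\, b_\theta(k)
+ \frac{\partial [A_\theta^{-1}]_{ik}}{\partial \theta_j}\,\frac{\partial b_\theta(k)}{\partial \theta_l}
+ \frac{\partial [A_\theta^{-1}]_{ik}}{\partial \theta_l}\,\frac{\partial b_\theta(k)}{\partial \theta_j}
+ [A_\theta^{-1}]_{ik}\,\frac{\partial^2 b_\theta(k)}{\partial \theta_j \partial \theta_l}
\Bigr).
\]
Of the ingredients on the right, $\|A_\theta^{-1}\|$ is uniformly bounded by $\mu^{-1}$ via Lemma~\ref{lem:invbound}; $b_\theta$, $\partial b_\theta/\partial \theta_j$, $\partial^2 b_\theta/\partial \theta_j \partial \theta_l$, and $\partial A_\theta/\partial \theta_j$, $\partial^2 A_\theta /\partial \theta_j \partial \theta_l$ are uniformly bounded via Lemma~\ref{lemm:abderivbounded} (and the boundedness of $b_\theta$ itself is immediate from the definition in Eq.~\eqref{eq:abdef} together with Assumption~\ref{ass:bcost} and Assumption~\ref{ass:features}); and the first derivative $\partial A_\theta^{-1}/\partial \theta_j$ is uniformly bounded via Eq.~\eqref{eq:invderiv2}.

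The only piece not already controlled is the second derivative of $A_\theta^{-1}$. For this we differentiate the standard identity $\partial A_\theta^{-1}/\partial \theta_j = - A_\theta^{-1} (\partial A_\theta/\partial \theta_j) A_\theta^{-1}$ once more in $\theta_l$, producing
\[
\frac{\partial^2 A_\theta^{-1}}{\partial \theta_j \partial \theta_l}
= A_\theta^{-1} \tfrac{\partial A_\theta}{\partial \theta_l} A_\theta^{-1} \tfrac{\partial A_\theta}{\partial \theta_j} A_\theta^{-1}
+ A_\theta^{-1} \tfrac{\partial A_\theta}{\partial \theta_j} A_\theta^{-1} \tfrac{\partial A_\theta}{\partial \theta_l} A_\theta^{-1}
- A_\theta^{-1} \tfrac{\partial^2 A_\theta}{\partial \theta_j \partial \theta_l} A_\theta^{-1}.
\]
Each factor in each of these three products is uniformly bounded in $\theta$ by the lemmas cited above, so, by submultiplicativity of matrix norms, $\|\partial^2 A_\theta^{-1}/\partial \theta_j \partial \theta_l\|$ is bounded uniformly in $\theta$.

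Plugging these bounds back into the expression for $\partial^2 \omega_\theta(i)/\partial \theta_j \partial \theta_l$, each of its entries is bounded by a finite constant independent of $\theta$, and the bound on $\lambda_{\max}(\nabla^2 \omega_\theta(i))$ follows. I do not anticipate a serious obstacle: the calculation is a mechanical product-rule expansion, and the one subtle point is simply recognizing that Lemmas~\ref{lemm:abderivbounded} and~\ref{lem:invbound}, together with the derivative-of-inverse formula applied twice, already provide uniform control on every factor.
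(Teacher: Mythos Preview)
Your proposal is correct and follows essentially the same route as the paper: differentiate Eq.~\eqref{eq:wderiv} once more, bound all factors via Lemmas~\ref{lemm:abderivbounded} and~\ref{lem:invbound} and Eq.~\eqref{eq:invderiv2}, and handle the remaining term $\partial^2 A_\theta^{-1}/\partial\theta_j\partial\theta_l$ by differentiating the inverse identity a second time. The only cosmetic difference is that the paper obtains the second-derivative-of-inverse formula by twice differentiating $A_\theta A_\theta^{-1}=I$ and rearranging, whereas you differentiate $\partial A_\theta^{-1}/\partial\theta_j = -A_\theta^{-1}(\partial A_\theta/\partial\theta_j)A_\theta^{-1}$ directly; the resulting expressions are equivalent.
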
 
\begin{proof} We differentiate twice the equation $\omega_{\theta} = A_{\theta}^{-1} b_{\theta}$ to get \begin{footnotesize} 
\[ \frac{\partial^2  w_{\theta}(i)}{\partial \theta_j \partial \theta_l} = \sum_{k} \frac{\partial^2  [A(\theta)^{-1}]_{ik}}{\partial \theta_j \partial \theta_l} b_k(\theta) + \frac{\partial [A(\theta)^{-1}]_{ik}}{\partial \theta_j} \frac{\partial b_k(\theta)}{\partial \theta_l} + \frac{\partial [A(\theta)^{-1}]_{ik}}{\partial \theta_l}  \frac{\partial b_k(\theta)}{\partial \theta_j} + [A(\theta)^{-1}]_{ik} \frac{\partial^2 b_k(\theta)}{\partial^2 \theta_j}
\] \end{footnotesize} 
We need to upper bound the right-hand side independently of $\theta$. Here we can apply Lemma \ref{lem:invbound} and Lemma \ref{lemm:abderivbounded} and Eq. (\ref{eq:invderiv2}). We see that the only term not covered by these three sources is $\frac{\partial^2  [A(\theta)^{-1}]_{ik}}{\partial \theta_j \partial \theta_l}$: all the other terms have already been bounded independently of $\theta$. 

We now turn to analyzing that term.  Taking $ A_{\theta}A_{\theta}^{-1}=I$, differentiating twice and rearranging we obtain:
\[
\frac{\partial^2  A_\theta^{-1}}{\partial \theta_j \partial \theta_l}
=-A_{\theta}^{-1}
\left(
\pdertwom{A_{\theta}}{\theta_j}{\theta_l} A_{\theta}^{-1} 
+\pder{A_{\theta}}{\theta_j}\pder{A_{\theta}^{-1}}{\theta_l}
+\pder{A_{\theta}}{\theta_l}\pder{A_{\theta}^{-1}}{\theta_j}
\right)
\] The norms of all the terms on the right-hand side are bounded independently of $\theta$ as a consequence of Lemmas \ref{lem:invbound} and Lemma \ref{lemm:abderivbounded} and Eq. (\ref{eq:invderiv2}), and thus we are done. 

\end{proof}

For convenience, we define  \begin{equation} \label{eq:lambdadef} \lambda = \sqrt{\sum_{i} \lambda_{i}^2}.
\end{equation}

\subsection{Second part of the proof: small-gain analysis} 

We now turn to the main body of the proof itself. Having established that various quantities appearing in our updates are Lipschitz and/or bounded, we will now consider how the critic and actor errors relate to each other. As before, our first step is to introduce (yet more) notation.


We will  find it convenient to define  
\[ \theta_{t+1/2} = \theta_t - \beta_t g(\theta_t, \omega_t),\] so that $\theta_{t+1/2}$ is a deterministic function of $\theta_t, \aor{\omega_t}$. The next actor iterate $\theta_{t+1}$ is then obtained from $\theta_{t+1/2}$ by adding noise: 
\begin{equation} \label{eq:halfrelation} \theta_{t+1} = \theta_{t+1/2} - \beta_t w_{\rm a}(t). \end{equation}

Let us observe a couple of consequence of these equations. First, we can apply Lemma \ref{prop:omegalip} have that \begin{eqnarray}
 || \omega_{\theta_{t+1/2}} - \omega_{\theta_t} ||^2 & \leq &  L_{\omega}^2 ||\theta_{t+1/2} - \theta_t||^2  = L_{\omega}^2 \beta_t^2 ||g(\theta_t, \omega_t)||^2 \label{eq:omegadiff1}
\end{eqnarray}
Similarly, using Lemma \ref{prop:omegalip} we have that 
\begin{eqnarray}
E ||\omega_{\theta_{t+1}} - \omega_{\theta_{t+1/2}} ||^2 & \leq & E L_{\omega}^2 ||\theta_{t+1} - \theta_{t+1/2}||^2  \leq  L_{\omega}^2 \beta_t^2 \sigma_{a}^2 \label{eq:omegadiff2}
\end{eqnarray}

\subsubsection{Analysis of the Critic Update}

With the above notation and preliminaries in place, we now turn to the proof itself. Our first step is to obtain a performance error bound on the critic. Of course, this cannot be done in isolation of what happens in the actor. The final bound we will derive in this subsection will bound the critic's performance in terms of the closeness to stationary of the actor. 


Our first step is to argue that, without noise, a small enough step starting from $\omega_t$ and moving in the direction of $\widetilde{\nabla}(\theta_t, \omega_t)$ reduces the distance to $\omega_{\theta_t}$, which, recall, is the ultimate limit of the TD(0) iteration were $\theta_t$ to be left fixed.  This is stated formally, along with an estimate of the size of the reduction, in the following lemma.

\begin{lemma}[{\ao Contractivity of the TD part of the update}] If $\alpha_t \leq \mu/(2L_{\nabla}^2) $, then 
\begin{eqnarray*} ||\omega_{t} - \omega_{\theta_t} - \alpha_t \widetilde{\nabla} (\theta_t, \omega_t)||  
\leq 
 (1 - \alpha_t \mu/4)  ||\Delta_t||.
\end{eqnarray*} 
\label{lemm:splitdecrease}
\end{lemma}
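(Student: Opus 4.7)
The plan is a one-shot contraction argument: expand the square $\|\Delta_t - \alpha_t \widetilde{\nabla}(\theta_t,\omega_t)\|^2$, use the strong monotonicity established in Eq.~\eqref{eq:critupdate1} for the linear term, and use Lipschitzness from Lemma~\ref{prop:deltalip} together with $\widetilde{\nabla}(\theta_t,\omega_{\theta_t}) = 0$ (Eq.~\eqref{eq:thetazero}) to control the quadratic term. The step-size restriction $\alpha_t \leq \mu/(2L_{\nabla}^2)$ is then exactly what is needed to absorb the quadratic term into half the linear term, leaving a clean contraction.

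Concretely, first I would write
\[ \|\Delta_t - \alpha_t \widetilde{\nabla}(\theta_t,\omega_t)\|^2 = \|\Delta_t\|^2 - 2\alpha_t \widetilde{\nabla}(\theta_t,\omega_t)^T \Delta_t + \alpha_t^2 \|\widetilde{\nabla}(\theta_t,\omega_t)\|^2. \]
Eq.~\eqref{eq:critupdate1} of Proposition~\ref{prop:reformulation} gives $\widetilde{\nabla}(\theta_t,\omega_t)^T \Delta_t \geq (\mu/2)\|\Delta_t\|^2$. For the quadratic term, since $\widetilde{\nabla}(\theta_t,\omega_{\theta_t})=0$ by Eq.~\eqref{eq:thetazero}, Lemma~\ref{prop:deltalip} yields $\|\widetilde{\nabla}(\theta_t,\omega_t)\| = \|\widetilde{\nabla}(\theta_t,\omega_t) - \widetilde{\nabla}(\theta_t,\omega_{\theta_t})\| \leq L_{\nabla} \|\Delta_t\|$. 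Substituting,
\[ \|\Delta_t - \alpha_t \widetilde{\nabla}(\theta_t,\omega_t)\|^2 \leq \bigl(1 - \alpha_t \mu + \alpha_t^2 L_{\nabla}^2\bigr)\|\Delta_t\|^2. \]

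Invoking $\alpha_t \leq \mu/(2L_{\nabla}^2)$ bounds $\alpha_t^2 L_{\nabla}^2 \leq \alpha_t \mu/2$, so the coefficient is at most $1 - \alpha_t \mu/2$. Taking square roots and using the elementary inequality $\sqrt{1-x} \leq 1 - x/2$ valid for $x \in [0,1]$ (noting that $\alpha_t \mu/2 \leq 1/(4L_{\nabla}^2/\mu \cdot \mu)\cdot \mu = 1/4 \cdot \ldots$; concretely $\alpha_t \mu/2 \leq \mu^2/(4L_{\nabla}^2) \leq 1$ since $L_{\nabla}\geq \mu/2$ by combining the bounds) yields
\[ \|\Delta_t - \alpha_t \widetilde{\nabla}(\theta_t,\omega_t)\| \leq \sqrt{1 - \alpha_t \mu/2}\,\|\Delta_t\| \leq (1 - \alpha_t \mu/4)\|\Delta_t\|, \]
which is the claim.

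There is no real obstacle here; the argument is textbook strongly-monotone-plus-Lipschitz contraction for a gradient-type step. The only subtle point is to ensure the step-size condition guarantees $\alpha_t \mu / 2 \leq 1$ so the inequality $\sqrt{1-x}\leq 1-x/2$ applies, which follows because $L_{\nabla} \geq \mu$ in the relevant regime (indeed $\mu/2 \leq \|\widetilde{\nabla}(\theta,\omega)\|/\|\Delta\|$ can be made consistent with $L_{\nabla} \geq \mu/\sqrt{2}$, and at worst one strengthens the step-size bound by a harmless constant). Everything else is a mechanical manipulation of the already-established inequalities.
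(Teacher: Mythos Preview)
Your proposal is correct and follows essentially the same route as the paper: expand the square, use Eq.~\eqref{eq:critupdate1} for the cross term, use Eq.~\eqref{eq:thetazero} plus Lemma~\ref{prop:deltalip} for the quadratic term, apply the step-size bound to get $(1-\alpha_t\mu/2)\|\Delta_t\|^2$, then take square roots. The paper simply writes ``taking square roots completes the proof'' where you make the inequality $\sqrt{1-x}\leq 1-x/2$ explicit; your justification that $\alpha_t\mu/2\leq 1$ is a bit garbled, but it does follow cleanly since Cauchy--Schwarz applied to Eq.~\eqref{eq:critupdate1} and Lemma~\ref{prop:deltalip} forces $L_\nabla\geq \mu/2$, hence $\alpha_t\mu/2\leq \mu^2/(4L_\nabla^2)\leq 1$.
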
 
\begin{proof} Indeed, 
\begin{eqnarray*} 
||\omega_t - \omega_{\theta_t} - \alpha_t \widetilde{\nabla} (\theta_t, \omega_t)||^2 & = & 
||\omega_t - \omega_{\theta_t}||^2 - 2 \alpha_t \widetilde{\nabla}(\theta_t, \omega_t)^T (\omega_t - \omega_{\theta_t}) + \alpha_t^2 ||\widetilde{\nabla} (\theta_t, \omega_t)||^2 \\ 
& \leq & ||\omega_t - \omega_{\theta_t}||_2^2 - 2 \alpha_t \frac{\mu}{2} ||\omega_t - \omega_{\theta_t}||_2^2 + \alpha_t^2 || \widetilde{\nabla} (\theta_t, \omega_t) - \widetilde{\nabla} (\theta_t, \omega_{\theta_t})||^2 \\ 
& \leq & (1 - \alpha_t \mu + L^2_{\nabla} \alpha_t^2) ||\omega_t - \omega_{\theta_t}||^2 \\ 
& \leq & (1 - \alpha_t \mu/2) ||\omega_t - \omega_{\theta_t}||_2^2.
\end{eqnarray*} 
where the first step is just expansion of the quadratic; the second step uses Eq. (\ref{eq:thetazero}) and Eq. (\ref{eq:critupdate1}); the third step uses Lemma \ref{prop:deltalip}; and the final step uses the inequality $\alpha_t \leq \mu/(2L_{\nabla}^2)$ to get that $L_{\nabla}^2 \alpha_t^2 \leq \alpha_t \mu/2$. Taking square roots of both sides completes the proof. 
\end{proof} 

\bigskip \noindent {\bf Recursion relation for the critic.} With this lemma  in place, let us attempt to derive a recursion relation that can be used to argue that $\omega_t$ tracks $\omega_{\theta_t}$, i.e., let us prove an upper bound on the difference between these two quantities. 

Indeed, because 
\begin{eqnarray*}
    \omega_{t+1} - \omega_{\theta_{t+1}} & = & P_{\Omega} \left( \omega_t - \alpha_t \widetilde{\nabla} (\theta_t, \omega_t) - \alpha_t w_{\rm c}(t) \right)  - \omega_{\theta_{t+1}} 
\end{eqnarray*} by the non-expansiveness of projection as well by Assumption \ref{ass:projection} (which states that $\omega_{\theta} \in \Omega$ for all $\theta$), we have that
\[   ||\omega_{t+1} - \omega_{\theta_{t+1}}||^2 \leq  ||\omega_t - \alpha_t \widetilde{\nabla} (\theta_t, \omega_t) - \alpha_t w_{\rm c}(t)  - \omega_{\theta_{t+1}}||^2 
\] We will then rewrite this as 
\begin{eqnarray*}
    ||\omega_{t+1} - \omega_{\theta_{t+1}}||^2  & = &
    \left| \left|\left( \omega_t - \omega_{\theta_t} - \alpha_t \widetilde{\nabla} (\theta_t, \omega_t) - \alpha_t w_{\rm c}(t) \right) + \left( \omega_{\theta_{t+1/2}} - \omega_{\theta_{t+1}}  \right)   + \left( \omega_{\theta_t} - \omega_{\theta_{t+1/2}} \right)  \right| \right|^2
\end{eqnarray*} 

We now take the (conditional) squared expectation of both sides to obtain
\begin{footnotesize} 
\begin{align*}
   E \left[ ||\omega_{t+1} - \omega_{\theta_{t+1}}||^2 ~|~ {\mathcal F}_t \right] & =   
    E \left[ \left| \left|  \omega_t - \omega_{\theta_t} - \alpha_t \widetilde{\nabla} (\theta_t, \omega_t) - \alpha_t w_{\rm c}(t) \right| \right|^2 ~|~ {\mathcal F}_t \right] \\ & + E \left[  \left| \left| \left( \omega_{\theta_{t+1/2}} - \omega_{\theta_{t+1}}  \right)   + \left( \omega_{\theta_t} - \omega_{\theta_{t+1/2}}   \right) \right| \right|^2 ~|~ {\mathcal F}_t \right]\\ 
    &+E \left[  \left(  \omega_{\theta_{t+1/2}} - \omega_{\theta_{t+1}}    + \omega_{\theta_t} - \omega_{\theta_{t+1/2}}   \right)^T \left( \omega_t - \omega_{\theta_t} - \alpha_t \widetilde{\nabla}_t  - \alpha_t w_{\rm c}(t) \right) | {\mathcal F}_t \right]
\end{align*} 
\end{footnotesize} where, recall, ${\mathcal F}_t$ was defined in Proposition \ref{prop:reformulation}; informally, it is the history of the process up to time $t$. We then upper bound this as 
\begin{small} 
\begin{eqnarray}
   E \left[ ||\omega_{t+1} - \omega_{\theta_{t+1}}|| ~|~ {\mathcal F}_t \right]^2 & = & 
    (1 - \alpha_t \mu/4) ||\omega_t - \omega_{\theta_t}||^2 + \alpha_t^2 \sigma_{c}^2 \nonumber \\ && + 2 L_{\omega}^2 \beta_t^2 \sigma_a^2 + 2 L_{\omega}^2 \beta_t^2 ||g(\theta_t, \omega_t)||_2^2 \nonumber \\ 
        && +E \left[   \left( \omega_{\theta_{t+1/2}} - \omega_{\theta_{t+1}}   \right)^T \left( \omega_t - \omega_{\theta_t} - \alpha_t \widetilde{\nabla} (\theta_t, \omega_t) - \alpha_t w_{\rm c}(t) \right) ~|~ {\mathcal F}_t \right] \nonumber \\
    && +E \left[   \left( \omega_{\theta_t} - \omega_{\theta_{t+1/2}}   \right)^T \left( \omega_t - \omega_{\theta_t} - \alpha_t \widetilde{\nabla} (\theta_t, \omega_t) - \alpha_t w_{\rm c}(t) \right) ~|~ {\mathcal F}_t \right] \nonumber
\end{eqnarray}
\end{small} 
where in the first line we used Lemma \ref{lemm:splitdecrease}, as well as that, conditional on ${\mathcal F}_t$, $w_{\rm c}(t)$ has zero mean;  and in the second line, we used Eq. (\ref{eq:omegadiff1}) and Eq. (\ref{eq:omegadiff2}) as well as the inequality $(a+b)^2 \leq 2 a^2 + 2 b^2$.

We next turn our attention to the fourth  line of the above equation. Using  Cauchy-Schwarz, Eq.~\eqref{eq:omegadiff1}, and the conditional zero-mean of $w_{\rm c}(t)$, we obtain 
\begin{small} 
\begin{eqnarray}
   E \left[ ||\omega_{t+1} - \omega_{\theta_{t+1}}|| ~|~ {\mathcal F}_t \right]^2 & \leq & 
    (1 - \alpha_t \mu/4) ||\omega_t - \omega_{\theta_t}||^2 + \alpha_t^2 \sigma_{c}^2 \nonumber \\ && + 2 L_{\omega}^2 \beta_t^2 \sigma_a^2 +2 L_{\omega}^2 \beta_t^2 ||g(\theta_t, \omega_t)||_2^2 \nonumber \\ 
     && +E \left[   \left( \omega_{\theta_{t+1/2}} - \omega_{\theta_{t+1}}   \right)^T \left( \omega_t - \omega_{\theta_t} - \alpha_t \widetilde{\nabla} (\theta_t, \omega_t) - \alpha_t w_{\rm c}(t) \right) ~|~ {\mathcal F}_t \right] \nonumber \\ 
        &&  + \beta_t L_{\omega} ||g(\theta_t, \omega_t)|| ~~ ||\omega_t - \omega_{\theta_t} - \alpha_t \widetilde{\nabla}(\theta_t,\omega_t)||. 
    \nonumber
\end{eqnarray}
\end{small}

Finally, if $\alpha_t$ is small enough to satisfy the conditions of  Lemma \ref{lemm:splitdecrease}, we can modify the last line of this equation as 
\begin{small} 
\begin{eqnarray}
    E \left[ ||\omega_{t+1} - \omega_{\theta_{t+1}}|| ~|~ {\mathcal F}_t \right]^2 & \leq & 
    (1 - \alpha_t \mu/4) ||\omega_t - \omega_{\theta_t}||^2 + \alpha_t^2 \sigma_{c}^2 \nonumber \\ && + 2 L_{\omega}^2 \beta_t^2 \sigma_a^2 + 2 L_{\omega}^2  \beta_t^2 ||g(\theta_t, \omega_t)||_2^2 \nonumber \\ 
     && +E \left[   \left( \omega_{\theta_{t+1/2}} - \omega_{\theta_{t+1}}   \right)^T \left( \omega_t - \omega_{\theta_t} - \alpha_t \widetilde{\nabla} (\theta_t, \omega_t) - \alpha_t w_{\rm c}(t) \right) ~|~ {\mathcal F}_t \right] \nonumber \\ 
        &&  + \beta_t L_{\omega} ||g(\theta_t, \omega_t)|| ~~ ||\Delta_t|| 
    \label{eq:iter1} 
\end{eqnarray}
\end{small}

Next, we need to obtain an estimate of the size of the inner product in the third line. This is done in our next lemma.

\begin{lemma} \label{lemm:cross} Suppose $\alpha_t \leq \mu/(2L_{\nabla}^2)$. Then 
\[ E \left[ \left(\omega_{\theta_{t+1/2}} - \omega_{\theta_{t+1}} \right)^T \left(\omega_t - \omega_{\theta_t} - \alpha_t \widetilde \nabla (\theta_t, \omega_t) \right) ~|~ {\mathcal F}_t \right] \leq \beta_t^2 \frac{\lambda}{2} \sigma_{\rm a'}^2 ||\Delta_t||
\] 
\end{lemma}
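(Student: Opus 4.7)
The plan is to exploit the fact that the only source of randomness (conditional on $\mathcal{F}_t$) inside the inner product is $\omega_{\theta_{t+1}}$, and to control it by a second-order Taylor expansion around $\omega_{\theta_{t+1/2}}$. Abbreviate $v := \omega_t - \omega_{\theta_t} - \alpha_t \widetilde\nabla(\theta_t, \omega_t)$. Note that $v$, $\omega_{\theta_{t+1/2}}$, and $\theta_{t+1/2}$ are all $\mathcal{F}_t$-measurable, whereas by Eq.~\eqref{eq:halfrelation}, $\theta_{t+1} = \theta_{t+1/2} - \beta_t w_{\rm a}(t)$, so the difference $\theta_{t+1}-\theta_{t+1/2}$ is zero-mean conditional on $\mathcal{F}_t$ and of order $\beta_t$ in magnitude.

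Next, for each coordinate $i$, Lemma \ref{lemma:twicediff} gives that $\theta \mapsto \omega_\theta(i)$ is $C^2$ with Hessian bounded in operator norm by $\lambda_i$ uniformly in $\theta$, so Taylor's theorem with Lagrange remainder yields
\[
\omega_{\theta_{t+1}}(i) - \omega_{\theta_{t+1/2}}(i) = -\beta_t \nabla\omega_{\theta_{t+1/2}}(i)^T w_{\rm a}(t) + \tfrac{\beta_t^2}{2}\, w_{\rm a}(t)^T H_i(\theta_\ast^{(i)}) w_{\rm a}(t),
\]
where $H_i$ denotes $\nabla^2 \omega_\theta(i)$ and $\theta_\ast^{(i)}$ is some point on the segment between $\theta_{t+1/2}$ and $\theta_{t+1}$. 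Dotting against $v$ and taking $E[\cdot\mid\mathcal{F}_t]$, the first-order term vanishes because $v_i$ and $\nabla\omega_{\theta_{t+1/2}}(i)$ are $\mathcal{F}_t$-measurable and $E[w_{\rm a}(t)\mid\mathcal{F}_t]=0$. We are left with a second-order remainder of order $\beta_t^2$.

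To handle the remainder, I would apply Cauchy--Schwarz over coordinates: $\bigl|\sum_i v_i\, w_{\rm a}(t)^T H_i(\theta_\ast^{(i)}) w_{\rm a}(t)\bigr| \leq \|v\| \bigl(\sum_i \lambda_i^2\bigr)^{1/2} \|w_{\rm a}(t)\|^2 = \|v\|\,\lambda\,\|w_{\rm a}(t)\|^2$, using the definition of $\lambda$ in Eq.~\eqref{eq:lambdadef}. Taking conditional expectation and applying the moment bound $E[\|w_{\rm a}(t)\|^2\mid\mathcal{F}_t] \leq \sigma_{\rm a'}^2$ (which follows from Jensen and the definition of $\sigma_{\rm a'}^2$) gives $|E[(\omega_{\theta_{t+1/2}} - \omega_{\theta_{t+1}})^T v\mid\mathcal{F}_t]| \leq \tfrac{\beta_t^2}{2}\lambda\,\sigma_{\rm a'}^2\,\|v\|$. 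Finally, since $\alpha_t \leq \mu/(2L_\nabla^2)$, Lemma \ref{lemm:splitdecrease} gives $\|v\| \leq (1-\alpha_t\mu/4)\|\Delta_t\| \leq \|\Delta_t\|$, completing the proof.

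The main obstacle is making the Taylor remainder argument rigorous when the intermediate point $\theta_\ast^{(i)}$ depends on the random vector $w_{\rm a}(t)$. The key is that Lemma \ref{lemma:twicediff} gives a \emph{uniform} (over all $\theta$) bound on $\|\nabla^2 \omega_\theta(i)\|$, so the pointwise inequality $|w_{\rm a}(t)^T H_i(\theta_\ast^{(i)}) w_{\rm a}(t)| \leq \lambda_i \|w_{\rm a}(t)\|^2$ holds on every realization, allowing us to interchange the absolute value and the conditional expectation without worrying about the measurability of $\theta_\ast^{(i)}$.
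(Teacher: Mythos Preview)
Your proposal is correct and follows essentially the same route as the paper: coordinate-wise second-order Taylor expansion of $\omega_{\theta_{t+1}}$ around $\omega_{\theta_{t+1/2}}$, the first-order term vanishes by $E[w_{\rm a}(t)\mid\mathcal F_t]=0$, the quadratic remainder is controlled via the uniform Hessian bounds $\lambda_i$ from Lemma~\ref{lemma:twicediff} and Cauchy--Schwarz, and the factor $\|v\|$ is replaced by $\|\Delta_t\|$ using Lemma~\ref{lemm:splitdecrease}. The only cosmetic difference is that you apply Cauchy--Schwarz pointwise and then take expectations (needing only $E[\|w_{\rm a}(t)\|^2\mid\mathcal F_t]\le\sigma_{\rm a'}^2$ via Jensen), whereas the paper takes expectations first and then applies Cauchy--Schwarz together with concavity of the square root (invoking the fourth-moment bound $\sqrt{E[\|w_{\rm a}(t)\|^4\mid\mathcal F_t]}\le\sigma_{\rm a'}^2$ directly); both yield the same final inequality.
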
 

\begin{proof} Let us define $I_t$ to be the left-hand side of the above equation. Consider the quantity $\omega_{\theta}(i)$, i.e., the $i$'th entry of the vector $\omega_{\theta}$. Recall that by Lemma \ref{lemma:twicediff}, this is a twice continuously differentiable function of $\theta$ with an available upper bound of $\lambda_i$ on the norm of the Hessian at any $\theta$. 

We can thus use a second order expansion to write 
\[ \omega_{\theta_{t+1}}(i) = \omega_{\theta_{t+1/2}}(i) + \nabla \omega_{\theta_{t+1/2}}(i)^T (\beta_t w_{\rm a}(t) )  + \frac{1}{2}
\beta_t^2 w_{\rm a}(t)^T \nabla^2 \omega_{\theta_i'}(i) w_{\rm a}(t),
\] where the gradient and Hessian are taking with respect to $\theta$ and $\theta_i'$ is some point on the line segment connecting $\theta_{t+1/2}$ and $\theta_{t+1}$. 
Consequently, \begin{small}
\begin{eqnarray*} I_t & =&  E \left[ \sum_{i} \left( \beta_t \nabla \omega_{\theta_{t+1/2}}(i)^T w_{\rm a}(t) - \beta_t^2 w_{\rm a}(t)^T \frac{1}{2} \nabla^2 \omega_{\theta_i'}(i) w_{\rm a}(t) \right) \right. \\ && ~~~~~~~~\left. \left(\omega_t(s,a) - \omega_{\theta_t}(s,a) - \alpha_t \widetilde \nabla (\theta_t, \omega_t)(i) \right) ~|~ {\mathcal F}_t \right],
\end{eqnarray*} \end{small} where note that we are abusing notation somewhat, as the $\theta'$ is actually different in each term of the sum. 

Since $E[w_{\rm a}(t)|{\mathcal F}_t] = 0$ and all the quantities $w_{\rm a}(t)$ multiplies above are deterministic conditional on ${\mathcal F}_t$,  we obtain that 
\begin{small}
\[ I_t=  \beta_t^2 E \left[ \sum_{i} \left( -w_{\rm a}(t)^T \frac{1}{2} \nabla^2 \omega_{\theta_i'}(i) w_{\rm a}(t) \right) \left(\omega_t(i) - \omega_{\theta_t}(i) - \alpha_t \widetilde \nabla (\theta_t, \omega_t)(i) \right) ~|~ {\mathcal F}_t \right].
\] \end{small} 
We apply  Cauchy-Schwarz and concavity of square root to obtain 
\begin{footnotesize} 
\[ I_t \leq \beta_t^2 \sqrt{E \left[ \sum_{i}  \left( w_{\rm a}(t)^T \frac{1}{2} \nabla^2 \omega_{\theta_i'}(i) w_{\rm a}(t) \right)^2 ~|~ {\mathcal F}_t \right] } \sqrt{\sum_{i} (\omega_t(i) - \omega_{\theta_t}(i) - \alpha_t \widetilde \nabla (\theta_t, \omega_t)(i))^2}
\] 
\end{footnotesize} 

By Lemma \ref{lemma:twicediff}, we have that $w_{\rm a}(t) \nabla^2 \omega_{\theta_i'}(i) w_{\rm a}(t) 
\leq \lambda_i || w_a(t)||^2$, and plugging this in above and using Lemma~\ref{lemm:splitdecrease} completes the proof. 
\end{proof}

Having obtained this lemma, we can now go back to Eq. (\ref{eq:iter1}) and use it to bound the third line. This argument leads to the following lemma, whose punchline is that an average of the quantities $||\Delta_t||^2$ satisfies an upper bound which we will later show to be quite small. 

\begin{lemma}[{\ao Bound on the critic error}] Suppose that $\alpha_t \in (0,1), \beta_t$ are nonincreasing sequences satisfying $\alpha_{t/2}  \leq  c_{\alpha} \alpha_t$ and $\beta_{t/2}  \leq  c_{\beta} \beta_t$ for all $t$ and for some constants $c_{\alpha}, c_{\beta}$. Suppose further that for all $t \geq T/2$ we have that $24 c_{\beta} \beta_t L_{\omega} \leq 1$ and $\alpha_t \leq \mu/(2L_{\nabla}^2) $  and
\begin{equation} \label{eq:stepconstraint}  6 L_{\omega}^2 \frac{c_{\beta}^2 \beta_T^2}{\alpha_T} \frac{8}{\mu} L_g^2 + 
\frac{c_{\beta} \beta_T}{\alpha_T} \frac{2 L_\omega}{\mu} + \frac{c_{\beta} \beta_T}{\alpha_T} \frac{4}{\mu} L_\omega L_g   \leq \frac{1}{2}.
\end{equation} Then 
\begin{small}
\begin{eqnarray}
   \frac{1}{T/2} \sum_{t={T/2}}^{T-1} E \left[ ||\Delta_t|| \right]^2 & \leq & 
    \frac{1}{\alpha_{T}} \frac{\aor{16}}{ \mu T}  ||\omega_{T/2} - \omega_{\theta_{T/2}}||^2 + c_{\alpha}^2 \alpha_T  \frac{8}{\mu}  \sigma_{c}^2 \nonumber \\ && + 2 L_{\omega}^2 \frac{c_{\beta}^2 \beta_T^2}{\alpha_T}  \frac{8}{\mu}  \sigma_a^2  +  6 L_{\omega}^2 \frac{c_{\beta}^2 \beta_T^2}{\alpha_T}  \frac{8}{\mu}  \delta^2 \nonumber \\ 
        && +8 \frac{c_{\beta}^2 \beta_T^2 \sigma_{\rm a'}^2  (\lambda/2) + c_{\beta} \beta_{T} L_{\omega} \delta }{\alpha_T \mu \ }   \sqrt{ \frac{1}{T/2} \sum_{t={T/2}}^{T-1} E ||\Delta_t||^2} \nonumber \\
    && + \frac{c_{\beta} \beta_T}{\alpha_T } \frac{8 L_{\omega}}{\mu} \frac{1}{T/2} \sum_{t={T/2}}^{T-1} E ||\nabla_t||^2
     \nonumber 
\end{eqnarray} \end{small} 

\label{lemm:recur1}
\end{lemma}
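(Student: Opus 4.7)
The plan is to start from the one-step inequality~\eqref{eq:iter1} (interpreting the left side as $E[\|\omega_{t+1}-\omega_{\theta_{t+1}}\|^2\mid \mathcal F_t]$, which is what the expansion of the square actually produces) and control its third line using Lemma~\ref{lemm:cross}. The only thing to check is that the $-\alpha_t w_{\rm c}(t)$ contribution to the inner product is harmless: conditional on $\mathcal F_t$, the vector $\omega_{\theta_{t+1/2}}-\omega_{\theta_{t+1}}$ is a deterministic function of $w_{\rm a}(t)$, and by Proposition~\ref{prop:reformulation} $w_{\rm a}(t)$ and $w_{\rm c}(t)$ are conditionally independent with $E[w_{\rm c}(t)\mid \mathcal F_t]=0$, so that cross term is zero in conditional expectation. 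What survives is the bound $\beta_t^2 (\lambda/2)\sigma_{\rm a'}^2\|\Delta_t\|$ from Lemma~\ref{lemm:cross}.

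Next I would eliminate $g(\theta_t,\omega_t)$ in favor of $\|\nabla_t\|$, $\|\Delta_t\|$ and $\delta$ by a triangle inequality: Lemma~\ref{prop:lipg} gives $\|g(\theta_t,\omega_t)-g(\theta_t,\omega_{\theta_t})\|\le L_g\|\Delta_t\|$; Eq.~\eqref{eq:deltabound} gives $\|g(\theta_t,\omega_{\theta_t})-\hat g(\theta_t,Q_{\theta_t}^*)\|\le \tfrac{K}{1-\gamma}\delta$; and Eq.~\eqref{eq:pgt2} identifies the remainder with $\nabla_t$. Squaring via $(a+b+c)^2\le 3(a^2+b^2+c^2)$ handles the quadratic $\beta_t^2\|g\|^2$ term, while in the bilinear term $\beta_t L_\omega \|g\|\cdot\|\Delta_t\|$ I would keep the decomposition linear: the $L_g\|\Delta_t\|$ part becomes a quadratic-in-$\|\Delta_t\|$ term which I absorb into the contraction, the $\delta$ part contributes a linear $\beta_t L_\omega\delta\|\Delta_t\|$ term, and the $\|\nabla_t\|$ part is split by Young's inequality into an $\|\nabla_t\|^2$ piece and a $\|\Delta_t\|^2$ piece.

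Taking total expectations, I obtain a recursion of the form
\begin{equation*}
E\|\Delta_{t+1}\|^2 \le \bigl(1-\tfrac{\alpha_t\mu}{4}+r_t\bigr)E\|\Delta_t\|^2 + N_t + C_t,
\end{equation*}
where $r_t$ collects all quadratic-in-$\|\Delta_t\|$ coefficients coming from the $\beta_t^2\|g\|^2$ expansion and the Young split, and $N_t,C_t$ collect noise/$\delta^2$ terms and $\|\nabla_t\|^2$ terms respectively. The step-size hypotheses $24c_\beta\beta_t L_\omega\le 1$, $\alpha_t\le \mu/(2L_\nabla^2)$, and crucially~\eqref{eq:stepconstraint}, together with the nonincreasing property and the $c_\alpha,c_\beta$ conditions (which let me replace $\alpha_t,\beta_t$ on $t\ge T/2$ by $\alpha_T,\beta_T$ up to constants), ensure $r_t\le \alpha_t\mu/8$ so that the effective contraction is $(1-\alpha_t\mu/8)$. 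Telescoping from $T/2$ to $T-1$, dividing through by $\alpha_T\mu T$, and using $\alpha_t\ge \alpha_T/c_\alpha$ on this window, the initial-condition term at $t=T/2$ gives the $\tfrac{1}{\alpha_T}\tfrac{8}{\mu T}\|\omega_{T/2}-\omega_{\theta_{T/2}}\|^2$ contribution, the $N_t$ terms produce the $\alpha_T\sigma_c^2$ and $(\beta_T^2/\alpha_T)\sigma_a^2, (\beta_T^2/\alpha_T)\delta^2$ contributions, and the $C_t$ terms produce the $(\beta_T/\alpha_T)(L_\omega/\mu)\cdot\text{avg}\,E\|\nabla_t\|^2$ contribution.

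The main obstacle is the two residual \emph{linear} terms $\beta_t^2(\lambda/2)\sigma_{\rm a'}^2\|\Delta_t\|$ from Lemma~\ref{lemm:cross} and $\beta_t L_\omega\delta\|\Delta_t\|$ from the $\delta$ component of $\|g\|$. These are not quadratic in $\|\Delta_t\|$, so they cannot be absorbed by the contraction without incurring an unwanted $\mu^{-2}$ factor (which is exactly what a na\"ive Lyapunov argument would pay, and is why \cite{chen2021closing} lose a factor of $\mu^{-2}$ here). Instead, after summing and dividing by $T/2$, I would apply Cauchy--Schwarz to $\tfrac{1}{T/2}\sum\|\Delta_t\|$ to convert it into $\sqrt{\tfrac{1}{T/2}\sum E\|\Delta_t\|^2}$; this produces precisely the square-root coupling on the fourth line of the lemma statement and is the signature of the nonlinear small-gain structure the paper exploits in the sequel.
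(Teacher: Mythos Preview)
Your proposal is correct and follows essentially the same route as the paper's own proof: plug Lemma~\ref{lemm:cross} into Eq.~\eqref{eq:iter1}, replace $\|g(\theta_t,\omega_t)\|$ by $\|\nabla_t\|+\bar\delta+L_g\|\Delta_t\|$ via Lemma~\ref{prop:lipg} and Eq.~\eqref{eq:deltabound}, split the bilinear piece by Young's inequality, sum over $t\in[T/2,T-1]$ using the ``rough bound'' $\sum y_t\le y_a/\zeta+\sum e_t/\zeta$, and finally Cauchy--Schwarz the residual linear $E\|\Delta_t\|$ terms into the square-root coupling.

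The only cosmetic difference is the placement of the absorption step: the paper keeps the $r_t E\|\Delta_t\|^2$ terms on the right-hand side, sums with $\zeta=\alpha_T\mu/4$, and \emph{then} invokes~\eqref{eq:stepconstraint} to move them to the left (gaining the factor $1/2$), whereas you absorb $r_t$ into the contraction up front to get an effective factor $(1-\alpha_t\mu/8)$ before summing. Both orderings are valid (since $\beta_t\le c_\beta\beta_T$ on the window and $\alpha_t\ge\alpha_T$, Eq.~\eqref{eq:stepconstraint} indeed gives $r_t\le\alpha_T\mu/8\le\alpha_t\mu/8$), and they produce the same constants in the end. Note that the hypothesis $24c_\beta\beta_t L_\omega\le 1$ is used in the paper specifically to merge the two $\sum E\|\nabla_t\|^2$ coefficients (one $O(\beta_T^2/\alpha_T)$ from the squared $\|g\|$ term and one $O(\beta_T/\alpha_T)$ from the Young split) into the single last-line coefficient of the lemma; you will need that combination step as well to arrive at the stated bound.
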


\begin{proof} 
As mentioned above, our first step is to plug Lemma \ref{lemm:cross} into Eq. (\ref{eq:iter1}). We thus obtain:
\begin{eqnarray}
   E \left[ ||\omega_{t+1} - \omega_{\theta_{t+1}}|| ~|~ {\mathcal F}_t \right]^2 & = & 
    (1 - \alpha_t \mu/4) ||\omega_t - \omega_{\theta_t}||^2 + \alpha_t^2 \sigma_{c}^2 \nonumber \\ && + 2 L_{\omega}^2 \beta_t^2 \sigma_a^2 + 2 L_{\omega}^2 \beta_t^2 ||g(\theta_t, \omega_t)||^2 \nonumber \\ 
    && +\beta_t^2 \frac{\lambda}{2} \sigma_{\rm a'}^2 ||\Delta_t|| \nonumber \\ 
        && + \beta_t L_{\omega} ||g(\theta_t, \omega_t)||~ ||\Delta_t||. \label{eq:iter2}
\end{eqnarray}

We next bound the norm $||g(\theta_t, \omega_t)||$ which appears twice in the above equation. Indeed, we have that 
\begin{eqnarray*}
||g(\theta_t, \omega_t)|| 
& = &  ||g(\theta_t, \omega_{\theta_t}) + g(\theta_t, \omega_t) - g(\theta_t, \omega_{\theta_t})|| \\ & \leq & 
|| g(\theta_t, \omega_{\theta_t})|| + L_g ||\omega_t - \omega_{\theta_t}|| \\ 
& = & || \hat{g}(\theta_t, Q_{\theta_t}^*) + g(\theta_t, \omega_{\theta_t}) -  \hat{g}(\theta_t, Q_{\theta_t}^*)  || + L_g ||\Delta_t||  \\ 
& \leq & ||\nabla_t|| + \bar{\delta} +  L_g ||\Delta_t|| 
\end{eqnarray*} Here the first line simply adds and subtracts the same quantity; the second line uses Lemma \ref{prop:lipg}; the third line adds and subtracts the same quantity; the fourth line uses Eq. (\ref{eq:deltabound}) and we define 
\[ \bar{\delta} = \frac{K}{1-\gamma} \delta.\] Finally, the last line simply uses our notation  $\nabla_t = \nabla V(\theta_t)$. 

Next, using the inequality $(a+b)^2 \leq 3(a^2 + b^2 + c^2)$ 
we thus have 
\[ ||g(\theta_t, \omega_t)||^2 \leq 3 ||\nabla_t||^2 + 3 \bar{\delta}^2 + 3 L_g^2 ||\Delta_t||^2. \] Plugging this into Eq.~\eqref{eq:iter2}, we obtain 
\begin{eqnarray}
   E \left[ ||\omega_{t+1} - \omega_{\theta_{t+1}}|| ~|~ {\mathcal F}_t \right]^2 & = & 
    (1 - \alpha_t \mu/4) ||\omega_t - \omega_{\theta_t}||^2 + \alpha_t^2 \sigma_{c}^2 \nonumber \\ && + 2 L_{\omega}^2  \beta_t^2 \sigma_a^2 + 6 L_{\omega}^2 \beta_t^2 ||\nabla_t||^2 + 6 L_{\omega}^2 \beta_t^2 \bar{\delta}^2 + 6 L_{\omega}^2 \beta_t^2 L_g^2 ||\Delta_t||^2 \nonumber \\
    && + \beta_t^2  \frac{\lambda}{2} \sigma_{\rm a'}^2 ||\Delta_t|| \nonumber \\     && + \beta_t L_{\omega} \left( ||\nabla_t|| + \bar{\delta} + L_{g} ||\Delta_t|| \right) ||\Delta_t|| \nonumber \\ 
\end{eqnarray}

Using the inequality $ab \leq (1/2) (a^2 + b^2)$, taking expectations, and combining both terms containing $||\Delta_t||$, we obtain 
\begin{eqnarray}
   E \left[ ||\omega_{t+1} - \omega_{\theta_{t+1}}||  \right]^2 & = & 
    (1 - \alpha_t \mu/4) E \left[ ||\omega_t - \omega_{\theta_t}||^2  \right] + \alpha_t^2 \sigma_{c}^2 \nonumber \\ && + 2 L_{\omega}^2 \beta_t^2 \sigma_a^2 + 6 L_{\omega}^2 \beta_t^2 E \left[ ||\nabla_t||^2 \right]  + 6L_{\omega}^2  \beta_t^2 \bar{\delta}^2 + 6 L_{\omega}^2 \beta_t^2 L_g^2  E \left[ ||\Delta_t||^2 \right] \nonumber \\
    && +\left(\beta_t^2 \frac{\lambda}{2} \sigma_{\rm a'}^2  + \beta_t L_{\omega} \bar{\delta} \right) E \left[ ||\Delta_t|| \right] \nonumber \\
        && + \beta_t L_{\omega} \frac{ E \left[ ||\Delta_t||^2 \right] +E \left[ ||\nabla_t||^2 \right]}{2} + \beta_t L_{\omega}  L_g E \left[ ||\Delta_t||^2 \right] \label{eq:intermediateiter1} 
\end{eqnarray}

Our next step is to sum this up over $t={T/2}, \ldots, T-1$. To do this without excessive calculation we will use the following rough bound. Observe that if we have the recursion relation
\[ y_{t+1} \leq (1 - \zeta) y_t + e_t \] 
for some $\zeta \in (0,1)$, then we have the rough bound
\[ \sum_{t=a}^b y_t \leq \frac{y_a}{\zeta} +  \sum_{t=a}^b \frac{e_t}{\zeta}.\] 
Let us apply this to Eq. (\ref{eq:intermediateiter1}) with $\zeta = \alpha_T \mu/4$. We can do this because the sequence $\alpha_t$ is assumed to be in $(0,1)$ and nonincreasing and $\alpha_t \mu \leq 1$ since $\alpha_t \leq \mu/(2 L_{\nabla}^2)$ by assumption. Whenever the bound $\alpha_t/\alpha_T$ appears, we can upper bound it by $c_{\alpha}$, and likewise with $\beta_t/\beta_T$. Proceeding this way and applying Cauchy-Schwarz and Jensen to the fourth line, we obtain

\begin{small}
\begin{eqnarray}
   \sum_{t={T/2}}^{T-1} E \left[ ||\omega_{t} - \omega_{\theta_{t}}|| \right]^2 & \leq & 
    \frac{1}{\alpha_{T}} \frac{4}{ \mu}  ||\omega_{T/2} - \omega_{\theta_{T/2}}||^2 + c_{\alpha}^2 \alpha_T \frac{T}{2} \frac{4}{\mu}  \sigma_{c}^2 \nonumber \\ && + 2 L_{\omega}^2 \frac{c_{\beta}^2 \beta_T^2}{\alpha_T} \frac{T}{2} \frac{4}{\mu}  \sigma_a^2 +  6 L_{\omega}^2 \frac{c_{\beta}^2 \beta_T^2}{\alpha_T} \frac{8}{\mu} \sum_{t={T/2}}^{T-1} E ||\nabla_t||^2 + 6 L_{\omega}^2 \frac{c_{\beta}^2 \beta_T^2}{\alpha_T} \frac{T}{2} \frac{4}{\mu}  \bar{\delta}^2 \nonumber \\ && + 6 L_{\omega}^2 \frac{c_{\beta}^2 \beta_T^2}{\alpha_T} \frac{8}{\mu}  L_g^2 \sum_{t={T/2}}^{T-1} E ||\Delta_t||^2  \nonumber \\
        && +4 \frac{c_{\beta}^2 \beta_T^2 \sigma_{\rm a'}^2  (\lambda/2) + c_{\beta} \beta_{T} L_{\omega} \bar{\delta} }{\alpha_T \mu}  \sqrt{T/2} \sqrt{E \sum_{t={T/2}}^{T-1} ||\Delta_t||^2} \nonumber \\
    && + \frac{c_{\beta} \beta_T}{\alpha_T } \frac{2 L_{\omega}}{\mu} \sum_{t={T/2}}^{T-1} E ||\nabla_t||^2 + \frac{c_{\beta} \beta_T}{\alpha_T} \frac{2 L_{\omega} }{\mu} \sum_{t={T/2}}^{T-1} E ||\Delta_t||^2 \nonumber \\
    &&+ \frac{c_{\beta} \beta_T}{\alpha_T} \frac{4}{\mu} L_g L_{\omega} \sum_{t={T/2}}^{T-1} E  ||\Delta_t||_2^2 
     \nonumber 
\end{eqnarray} \end{small}

Let us observe that the coefficient of $\sum_{t=T/2}^{T-1} ||\Delta_t||^2$ on the right-hand side is 
\[ 6 L_{\omega}^2 \frac{c_{\beta}^2 \beta_T^2}{\alpha_T} \frac{8}{\mu} L_g^2 + 
\frac{c_{\beta} \beta_T}{\alpha_T} \frac{2 L_\omega}{\mu} + \frac{c_{\beta} \beta_T}{\alpha_T} \frac{4}{\mu} L_\omega L_g   \leq \frac{1}{2},
\] where the inequality is by assumption in Eq. (\ref{eq:stepconstraint}). We thus have

\begin{small}
\begin{eqnarray}
   \frac{1}{2} \sum_{t={T/2}}^{T-1} E \left[ ||\omega_{t} - \omega_{\theta_{t}}|| \right]^2 & \leq & 
    \frac{1}{\alpha_{T}} \frac{4}{ \mu}  ||\omega_{T/2} - \omega_{\theta_{T/2}}||^2 + c_{\alpha}^2 \alpha_T \frac{T}{2} \frac{4}{\mu}  \sigma_{c}^2 \nonumber \\ && + 2 L_{\omega}^2 \frac{c_{\beta}^2 \beta_T^2}{\alpha_T} \frac{T}{2} \frac{4}{\mu}  \sigma_a^2  +  6 L_{\omega}^2 \frac{c_{\beta}^2 \beta_T^2}{\alpha_T} \frac{T}{2} \frac{4}{\mu}  \bar{\delta}^2 \nonumber \\ 
        && +4 \frac{c_{\beta}^2 \beta_T^2 \sigma_{\rm a'}^2  (\lambda/2) + c_{\beta} \beta_{T} L_{\omega} \bar{\delta} }{\alpha_T \mu}  \sqrt{T/2} \sqrt{E \sum_{t={T/2}}^{T-1} ||\Delta_t||^2} \nonumber \\
    && + \frac{c_{\beta} \beta_T}{\alpha_T } \frac{4 L_{\omega}}{\mu} \sum_{t={T/2}}^{T-1} E ||\nabla_t||^2,
     \nonumber 
\end{eqnarray} \end{small}

where we also combined two different terms that multiply $\sum_{t=T/2}^{T-1} E ||\nabla_t||^2$ using the assumption that $24 c_{\beta} \beta_{T} L_{\omega}  \leq 1$. Finally, dividing by $T/4$ we obtain the  statement of the lemma.

\end{proof} 



\subsubsection{Analysis of the actor update} 

We have just given an analysis of the performance of the critic, and that analysis had an error term corresponding to the actor stationarity, i.e., to the quantities $||\nabla_t||$. We now give an analysis of the reverse, i.e., a performance bound on the actor in terms of the critic error. 

\begin{lemma}[{\ao Bound on the actor error}] Suppose $\beta_t \leq 1/(6 L_V)$ for all $ t \geq T/2$. Then \begin{footnotesize} 
\[ \frac{1}{T/2} \sum_{t=T/2}^{T-1} E ||\nabla_t||^2 \leq 8 \left( \frac{ V(\theta_{T/2}) - V(\theta_T)}{\beta_T T} +  c_{\beta} L_g^2 \frac{1}{T/2} \sum_{t=T/2}^{T-1} E ||\Delta_t||^2 +  c_{\beta}  \bar{\delta}^2 +  c_{\beta}^2 \beta_T \frac{3}{4} L_V \sigma_{\rm a}^2 \right) \] \end{footnotesize} \label{lemm:gradgain}
\end{lemma}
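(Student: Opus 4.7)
The plan is to run the classical ``descent lemma'' analysis for stochastic gradient descent on the nonconvex function $V$, treating the actor update in Eq.~\eqref{eq:mainupdate} as SGD with a biased gradient estimate, where the bias is quantified by the critic error $\|\Delta_t\|$ and the approximation error $\bar\delta = K\delta/(1-\gamma)$.

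First, I would invoke $L_V$-smoothness from Proposition~\ref{prop:smoothvalue} to write
\[
V(\theta_{t+1}) \leq V(\theta_t) - \beta_t \nabla_t^T\bigl(g(\theta_t,\omega_t)+w_{\rm a}(t)\bigr) + \tfrac{L_V \beta_t^2}{2}\|g(\theta_t,\omega_t)+w_{\rm a}(t)\|^2,
\]
and take conditional expectation given $\mathcal F_t$. Since $E[w_{\rm a}(t)\mid\mathcal F_t]=0$ and $E[\|w_{\rm a}(t)\|^2\mid\mathcal F_t]\le\sigma_{\rm a}^2$, the cross term kills the linear noise and the quadratic noise contributes at most $\tfrac{L_V\beta_t^2}{2}\sigma_{\rm a}^2$.

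Second, I would split $-\nabla_t^T g(\theta_t,\omega_t) = -\|\nabla_t\|^2 + \nabla_t^T(\nabla_t - g(\theta_t,\omega_t))$ and bound the bias $\|\nabla_t - g(\theta_t,\omega_t)\|$ by adding and subtracting $g(\theta_t,\omega_{\theta_t})$: Eq.~\eqref{eq:deltabound} controls $\|\nabla V(\theta_t)-g(\theta_t,\omega_{\theta_t})\|\le\bar\delta$, and Lemma~\ref{prop:lipg} gives $\|g(\theta_t,\omega_{\theta_t})-g(\theta_t,\omega_t)\|\le L_g\|\Delta_t\|$. Applying $2ab\le a^2+b^2$ then produces a term $\tfrac12\|\nabla_t\|^2$ absorbable on the left, plus $\bar\delta^2$ and $L_g^2\|\Delta_t\|^2$ error terms. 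The $\|g(\theta_t,\omega_t)\|^2$ coming from the smoothness quadratic is bounded in the same way, using $(a+b+c)^2\le 3(a^2+b^2+c^2)$, to yield $3\|\nabla_t\|^2+3\bar\delta^2+3L_g^2\|\Delta_t\|^2$.

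Third, collecting terms, I obtain
\[
E[V(\theta_{t+1})\mid\mathcal F_t] - V(\theta_t) \leq -\beta_t\!\left(\tfrac12-\tfrac{3L_V\beta_t}{2}\right)\!\|\nabla_t\|^2 + C_1\beta_t(\bar\delta^2+L_g^2\|\Delta_t\|^2) + \tfrac{L_V\beta_t^2}{2}\sigma_{\rm a}^2.
\]
The hypothesis $\beta_t\le 1/(6L_V)$ forces the coefficient of $\|\nabla_t\|^2$ to be at least $\beta_t/4$, giving a genuine descent direction. Taking total expectation, summing over $t=T/2,\ldots,T-1$, the gradient terms telescope on the left as $V(\theta_{T/2})-V(\theta_T)$; the factors $\beta_t$ on the right are bounded by $c_\beta\beta_T$ and the factor $\beta_t^2$ by $c_\beta^2\beta_T^2$ using the ratio hypothesis. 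Finally dividing both sides by $\beta_T T/8$ yields the stated bound. The main ``obstacle'' is simply keeping the constants tidy enough to match the $8(\cdot)$ form in the statement — there is no technical depth beyond the standard biased-SGD descent lemma, since all Lipschitz/bias ingredients have already been set up in Lemma~\ref{prop:lipg}, Eq.~\eqref{eq:deltabound}, and Proposition~\ref{prop:smoothvalue}.
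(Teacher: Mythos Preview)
Your proposal is correct and follows essentially the same route as the paper's proof: apply the descent lemma from Proposition~\ref{prop:smoothvalue}, decompose the actor update as $\nabla_t$ plus a bias $d_t$ (controlled via Lemma~\ref{prop:lipg} and Eq.~\eqref{eq:deltabound}) plus zero-mean noise $w_{\rm a}(t)$, use $\beta_t\le 1/(6L_V)$ to make the coefficient of $\|\nabla_t\|^2$ at least $\beta_t/4$, then sum over $t=T/2,\ldots,T-1$, bound $\beta_t\le c_\beta\beta_T$, and divide by $(\beta_T/4)(T/2)$. The only cosmetic difference is that the paper names the bias $d_t$ explicitly and applies $(a+b+c)^2\le 3(a^2+b^2+c^2)$ to the full quadratic $\|\nabla_t+d_t+w_{\rm a}(t)\|^2$, whereas you first use $E[\|g+w_{\rm a}\|^2\mid\mathcal F_t]=\|g\|^2+E\|w_{\rm a}\|^2$ and then expand $\|g\|^2$; both lead to the stated bound.
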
 

\begin{proof} We will attempt to analyze the actor update as an inexact gradient descent. Our first step is to upper bound  the difference between the expected actor update direction $g(\theta_t, \omega_t)$ and the true gradient $\nabla V(\theta_t) = \hat{g}(\theta_t, Q_{\theta_t}^*)$. To do this, we argue as
\begin{eqnarray*} || g(\theta_t, \omega_t) - \hat{g}(\theta_t, Q_{\theta_t}^*)|| & \leq & ||g(\theta_t, \omega_t) - g(\theta_t, \omega_{\theta_t}) || + || g(\theta_t, \omega_{\theta_t}) - \hat{g}(\theta_t, Q_{\theta_t}^*) || \\ 
& \leq & L_g ||\Delta_t|| + \bar{\delta},
\end{eqnarray*}  where the bound on the first term comes from Lemma \ref{prop:lipg} while the bound on the second term comes from  Eq. (\ref{eq:deltabound}) where, recall that $\bar{\delta} = (K/(1-\gamma)) \delta.$  Consequently, we can write the actor update as 
\begin{equation} \label{eq:thetadiff} \theta_{t+1} = \theta_t - \beta_t \nabla V(\theta_t) - \beta_t d_t - \beta_t w_{\rm a}(t),
\end{equation} where \begin{equation} \label{eq:dbound} ||d_t|| \leq   L_g ||\Delta_t|| + \bar{\delta}, 
\end{equation}

By Proposition \ref{prop:smoothvalue}, the function $V(\theta)$ has $L$-Lipschitz gradient. By the ``descent lemma'' (e.g., Theorem 4.22 in \cite{beck2014introduction}) it satisfies 
\[ V(\theta_{t+1}) \leq V(\theta_t) + \nabla V(\theta_t)^T (\theta_{t+1} - \theta_t) + \frac{L_V}{2} ||\theta_{t+1} - \theta_t||^2. \] Plugging this into Eq. (\ref{eq:thetadiff}), taking expectations, and using the inequality $$(\sum_{i=1}^k |a_i|)^2 \leq k \sum_{i=1}^k a_i^2,$$ we obtain
\begin{eqnarray*} E V(\theta_{t+1}) & \leq &  E V(\theta_t) - (\beta_t/2)  E ||\nabla V(\theta_t)||^2 + \frac{\beta_t}{2}E  ||d_t||^2  \\ 
&& + 3 \frac{L_V \beta_t^2}{2} \left(  E ||\nabla V(\theta_t)||^2 + E ||d_t||^2 +  \sigma_a^2 \right)
\end{eqnarray*} Now as a consequence of our assumptions, we have that $(3/2) L_V \beta_t^2 \leq \beta_t/4$ so that we have 
\[ E V(\theta_{t+1}) \leq E V(\theta_t) - (\beta_t/4) E || \nabla V(\theta_t)||^2 + \beta_t E ||d_t||^2 + \frac{3}{2} L_V \beta_t^2 \sigma_{\rm a}^2.\] Now Eq. (\ref{eq:dbound}) allows us to rewrite this as 
\[ E V(\theta_{t+1}) \leq E V(\theta_t) - (\beta_t/4) E ||\nabla_t||_2^2 + 2 \beta_t L_g^2 E ||\Delta_t||^2 +2 \beta_t  \bar{\delta}^2 +  \frac{3}{2} L_V \beta_t^2 \sigma_{\rm a}^2. \] We re-arrange this as 
\[  (\beta_t/4) E ||\nabla_t||_2^2 \leq E V(\theta_t) - E V(\theta_{t+1})  + 2 \beta_t L_g^2 E ||\Delta_t||^2 +2 \beta_t  \bar{\delta}^2 + \frac{3}{2} L_V \beta_t^2 \sigma_{\rm a}^2. \] Summing this over $t=T/2, \ldots, T-1$ we obtain \begin{footnotesize}
\[ \sum_{t=T/2}^{T-1} \frac{\beta_t}{4}  E ||\nabla_t||^2  \leq  E V(\theta_{T/2})  - E V(\theta_{T}) + 2 L_g^2 \sum_{t=T/2}^{T-1} \beta_t E ||\Delta_t||^2 +  2   \bar{\delta}^2   \sum_{t=T/2}^{T-1} \beta_t +  \frac{3}{2} L_V \sigma_{\rm a}^2 \sum_{t=T/2}^{T-1} \beta_t^2 \] \end{footnotesize} Dividing by $(\beta_T/4)(T/2)$ concludes the proof. 
\end{proof}

\subsubsection{A nonlinear small-gain theorem} 

We have just finished deriving two relations, one analyzing how the actor error affects the critic error (Lemma \ref{lemm:recur1}), and the other one analyzing how critic error affects actor error (Lemma \ref{lemm:gradgain}).
We now need to put these two bounds together. This is done using the following nonlinear version of the small gain theorem. 

\begin{lemma}[{\ao Small-gain bound}] \label{lemm:sqrt} 
Suppose $x_1, x_2, \ldots$ and \aor{$z_1, z_2, \ldots$}, are  sequences of vectors and  $||\cdot||_T$ is a semi-norm on the first $T$ element of a vector sequence. If there exist nonnegative numbers $a, b,c,d, e$ such that 
\begin{eqnarray}  ||x||_{T} & \leq & a + b \sqrt{||x||_T} + c ||z||_T, \label{eq:firsteq} \\ 
 ||z||_T & \leq & d + e ||x||_T \label{eq:secondeq}
 \end{eqnarray} and $2 ce < 1$, then for all $T$, 
\begin{eqnarray*} ||x||_T \leq \frac{ 2 a + b^2 + 2cd}{1-2ce}  
\end{eqnarray*} 
\end{lemma}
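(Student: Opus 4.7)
The plan is to eliminate the square-root nonlinearity using Young's inequality (AM-GM) so that the two coupled inequalities reduce to a single linear inequality in $\|x\|_T$, after which the small-gain structure produces the claimed bound.

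Concretely, I would first apply AM-GM to the term $b\sqrt{\|x\|_T}$ in \eqref{eq:firsteq}. Since $2\sqrt{AB}\leq A+B$ for nonnegative reals, taking $A=b^2$ and $B=\|x\|_T$ gives
\[
  b\sqrt{\|x\|_T} \;\leq\; \tfrac{1}{2}b^2 + \tfrac{1}{2}\|x\|_T.
\]
Substituting this into \eqref{eq:firsteq} and moving the $\tfrac12\|x\|_T$ term to the left-hand side yields
\[
  \tfrac12\|x\|_T \;\leq\; a + \tfrac12 b^2 + c\|z\|_T,
\]
so that $\|x\|_T \leq 2a + b^2 + 2c\|z\|_T$. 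This is now a purely linear bound relating $\|x\|_T$ and $\|z\|_T$, and the square-root obstruction has disappeared.

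Next, I would feed the second hypothesis \eqref{eq:secondeq} into this linear bound:
\[
  \|x\|_T \;\leq\; 2a + b^2 + 2c\bigl(d + e\|x\|_T\bigr) \;=\; 2a + b^2 + 2cd + 2ce\,\|x\|_T.
\]
Rearranging and using the small-gain hypothesis $2ce<1$, which guarantees that $1-2ce>0$ and so we may safely divide,
\[
  (1-2ce)\,\|x\|_T \;\leq\; 2a + b^2 + 2cd,
\]
which is exactly the claimed conclusion after dividing by $1-2ce$.

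The only subtlety I anticipate is making sure that the manipulations are valid, i.e., that $\|x\|_T$ is finite (so that moving it to the other side is meaningful). This is implicit in treating $\|\cdot\|_T$ as a semi-norm applied to a finite initial segment of the sequence; the hypotheses then give a genuine inequality among finite nonnegative reals. No induction, no additional smallness of parameters beyond $2ce<1$, and no auxiliary results from the rest of the paper are needed — the lemma is a self-contained algebraic fact, and the single nontrivial idea is the AM-GM step that linearizes the $\sqrt{\|x\|_T}$ term at the cost of a $b^2$ additive constant.
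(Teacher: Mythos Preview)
Your proof is correct. Both you and the paper arrive at the same intermediate linear bound $\|x\|_T \leq 2a + b^2 + 2c\|z\|_T$ and then substitute \eqref{eq:secondeq} and rearrange; the difference is only in how that intermediate bound is obtained. The paper sets $\|x\|_T = y^2$, solves the quadratic $y^2 - by - (a+c\|z\|_T)\le 0$ via the quadratic formula, and then squares using $(u+v)^2\le 2(u^2+v^2)$. Your AM--GM step $b\sqrt{\|x\|_T}\le \tfrac12 b^2 + \tfrac12\|x\|_T$ accomplishes the same linearization in one line, is more elementary, and makes transparent why the ``cost'' of removing the square root is exactly an additive $b^2$ and a factor of $2$ in front of $a$ and $c$. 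The paper's route, on the other hand, makes it clearer that one is essentially bounding $\|x\|_T$ by the largest root of the associated quadratic, which could in principle be sharpened if one wanted tighter constants.
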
 

Note that this differs  from the usual small-gain theorem due to the square root in the first equation.  This difference is what forces us to put an extra factor of two into the gain bound. In words, what this lemma says is that the introduction of a square root into one of the small gain bounds adds a term to the final bound which is quadratic in the variable $b$ multiplying the square root. The proof is given next.

\smallskip

\begin{proof} Let us substitute $||x||_T = y^2$ where $y$ is also nonnegative. Then 
\[ y^2 \leq a + b y  + c ||z||_T \] or 
\[ y^2 - b  y - (a + c||z||_T) \leq 0 \] 
The set where a quadratic with a leading coefficient of $1$ is nonpositive is always the interval between its roots. This we can upper bound $y$ by the largest root of this quadratic: 
\[ y \leq \frac{b + \sqrt{b^2 + 4 (a+c||z||_T)}}{2}. \] Squaring both sides and using the inequality $(u+v)^2 \leq 2(u^2 + v^2)$, 
\[ y^2 \leq \frac{2 b^2 + 2 (b^2 + 4 (a + c ||z||_T))}{4}, \]
and simplifying and using $||x||_T=y^2$, we obtain. 
\begin{equation} \label{eq:tempeq} ||x||_T \leq 2 a + 2 c ||z||_T + b^2.\end{equation}

We now plug  Eq. (\ref{eq:secondeq}) into this to obtain that 
\[ ||x||_T \leq 2a + b^2 + 2c(d + e ||x||_T), 
\] so that 
\[ ||x||_T \leq \frac{ 2 a + b^2 + 2cd}{1-2ce} \]  

\end{proof} 

We now use this nonlinear small-gain theorem we have derived to put together our bounds on actor and critic. Specifically, we will simply combine Lemma \ref{lemm:recur1} giving us a performance bound on the critic with Lemma \ref{lemm:gradgain} on the actor. Inspecting these two lemmas, we see that their form is exactly the form one needs to apply the small gain theorem above. 

\smallskip

\begin{proof}[Proof of Theorem \ref{thm:iid} ({\ao our main result})] To apply the  small gain theorem just derived, we will plug in $x_t = \Delta_t$ and $y_t = \nabla_t$. Naturally, the semi-norm we have derived will be $||u||_{T} = \frac{1}{T/2} \sum_{t=T/2}^{T-1} u_t^2.$ Inspecting Lemma \ref{lemm:recur1} and Lemma \ref{lemm:gradgain}, we can pattern match the quantities $a,b,c,d,e$ to the quantities from those equations, and obtain  

\begin{small}
\begin{eqnarray}
   \frac{1}{T/2} \sum_{t={T/2}}^{T-1} E \left[ ||\Delta_t|| \right]^2 & \leq & 
    \frac{1}{\alpha_{T}} \frac{\aor{64}}{ \mu T}  ||\omega_{T/2} - \omega_{\theta_{T/2}}||^2 + c_{\alpha}^2 \alpha_T  \frac{32}{\mu}  \sigma_{c}^2 \nonumber \\ && + 4 L_{\omega}^2 \frac{c_{\beta}^2 \beta_T^2}{\alpha_T}  \frac{16}{\mu}  \sigma_a^2  +  12 L_{\omega}^2 \frac{c_{\beta}^2 \beta_T^2}{\alpha_T}  \frac{16}{\mu}  \aor{\delta}^2 \nonumber \\ 
        && +2 \frac{2 c_{\beta}^4 \beta_T^4 \sigma_{\rm a'}^4  (\lambda/2)^2 + 2 c_{\beta}^2 \beta_{T}^2 L_{\omega}^2 \aor{\delta}^2 }{\alpha_T^2 \mu^2  }    \nonumber \\
  && + \frac{c_{\beta} \beta_T}{\alpha_T } \frac{16 L_{\omega}}{\mu} 16 \left( \frac{ V(\theta_{T/2}) - V(\theta_T)}{\beta_T T} +   c_{\beta}  \bar{\delta}^2 +  c_{\beta}^2 \beta_T \frac{3}{4} L_V \sigma_{\rm a}^2\right)
     \label{eq:jgain2323} 
\end{eqnarray} \end{small} 
 
 For this to be a valid step, we need to make sure the gain condition of $1-2ec>\aor{1/2}$ is satisfied. For that, we need to choose  $\beta_{T} = c' \alpha_T$ for a small enough $c'$. Then the small gain condition $2ec<\aor{1/2}$ is equivalent to $  c' \leq \frac{\mu}{8L_\omega}  \frac{1}{4 c_{\beta}^2 L_g^2}.$

 We can now do the final step-size selection step.  First, we can choose $\alpha_t = 1/\sqrt{t}$ and $\beta_t = c'/\sqrt{t}$, where $c'$ will be chosen to make this step-size satisfy all the assumptions we have imposed throughout. Let us now recall what those are. 
 
 First, we have assumed $\alpha_t \leq \mu/(2L_{\nabla}^2) $ and  $\beta_t \leq 1/(6 L_V)$ and \aor{$24 c_{\beta} \beta_t L_{\omega} \leq 1$}. However, these need to hold for $t \geq T/2$. Thus any step-size choices $\alpha_t, \beta_t$ which decrease to zero will eventually satisfy these. 
 
 However, we have two equations that have imposed relationships between the step-sizes $\alpha_t$ and $\beta_t$: these are $  c' \leq \frac{\mu}{8L_\omega}  \frac{1}{4 c_{\beta}^2 L_g^2}.$ and Eq. (\ref{eq:stepconstraint}). Considering Eq. (\ref{eq:stepconstraint}), for large enough $T$, the first term of that equation will be negligible compared to the second and third, and so to satisfy it we need to choose $c'  = O \left( \max \left(  \frac{\mu}{c_{\beta} L_{\omega}}, \frac{\mu}{L_{\omega} L_g c_{\beta}} \right) \right).$  We conclude there is a particular choice of $c$ depending on $c_{\beta}, \mu, L, L_{\omega}, L_g$ that makes all of the step-size assumptions satisfied. Further, treating all variables except $\mu$ as constant, we have that $c' = O(\mu)$. 

Let us next examine Eq. (\ref{eq:jgain2323}) to see what convergence rate we can obtain in that scenario. We immediately see that every term  except for the $O(\bar{\delta}^2)$ terms, which gets multiplied by constant factors, decays like $1/\sqrt{T}$ or faster (we use that $||\omega_{T/2} - \omega_{\theta_{T/2}}||$ is bounded independently of $T$, because these terms are in the compact set $\Omega$, and likewise  $||V_{\theta_{T/2}}-V_{\theta_T}||$ is bounded independently of $T$ as the value of any policy lies in $C_{\rm max}/(1-\gamma)$). Thus the right-hand side of Eq. (\ref{eq:jgain2323}) is $O(\bar{\delta}^2 + \mu^{-1}/\sqrt{T})$. We then plug this into Lemma \ref{lemm:gradgain} to obtain the same bound for the average of the last $T/2$ quantities $||\nabla_t||^2$.

\end{proof}

 \begin{remark} {\ao We briefly revisit the issue of time-scale separation in the context of our proof above. Recall that, in the Introduction, we discussed how slowing down the actor relative to the critic would result in worse convergence rate. Let us see how this issue appears in the context of our small-gain proof.} 
 
 {\ao Indeed, note that, immediately after Eq. (\ref{eq:jgain2323}), we choose $\alpha_t$ and $\beta_t$ both to be proportional to $1/\sqrt{t}$. To see why we make this choice, observe Eq. (\ref{eq:jgain2323}) contains terms that scale as $\alpha_T, (\alpha_T T)^{-1}$, which force the choice $\alpha_T \sim 1/\sqrt{T}$ to get the best  decay rate of $T^{-1/2}$ that one can get from Eq. (\ref{eq:jgain2323}). Likewise, Lemma \ref{lemm:gradgain} contains terms that scale as $\beta_T, (\beta_T T)^{-1}$, forcing us to make a similar choice for $\beta_t$ (fortunately, this pair of choices results in all other terms of those bounds decaying at $T^{-1/2}$ or faster).} 
 
 {\ao Thus, consistent with our earlier discussion, we see that a two-timescale approach which artificially slows down the actor by choosing $\alpha_t$ to decay at an asymptotically faster rate than $\beta_t$ would result in a worse final convergence rate because at least one of the four terms $\alpha_T, (\alpha_T T)^{-1}, \beta_T, (\beta_T T)^{-1}$ would decay slower than $T^{-1/2}$.} 
\end{remark} 

 \medskip
 
\begin{remark} {\ao It is usually possible to take a small gain argument and convert it to an explicit Lyapunov function, and it is natural to wonder if one could do the same here. Ideally, one would write down an explicit Lyapunov function $V$ which converges to an approximate stationary point of actor-critic at a rate of $\mu/\sqrt{t}$ as in our main result. We would conjecture that this is indeed possible. The main difficulties in establishing this lie in the nonlinear nature of the small-gain theorem we use here and the step-size-dependent discrete nature of the updates. }
\end{remark} 

\begin{remark} {\ao The small gain theorem is extremely widely used in control. The core idea of it is applicable to other settings where convergence of interconnected of dynamical systems is under consideration. Such couplings are naturally present in the context of actor-critic.} 

{\ao The above application of small gain theorem is slightly unusual, since the ``typical'' application in control requires a careful bound on the input-output gain of a dynamical system, usually involving some kind of frequency analysis. Because the recursions here are relatively simpler from the dynamical point of view, frequency analysis turns out to be unnecessary, and we can attempt to directly bound the gains from critic-error to actor-error and vice versa. The argument presented here is in the spirit of the previous work of one of the authors in \cite{nedic2017achieving}.}
\end{remark}

\section{Conclusion} We  showed how to analyze single timescale actor critic using a nonlinear version of the small gain theorem, resulting in an improved sample complexity of $O\left(\mu^{-2} \epsilon^{-2}\right)$. Our work suggests a number of future directions.

First, we have assumed that the critic uses a linear approximation. Generalizing these results to even some tractable classes of nonlinear approximations is likely to be challenging. However, there are a number of theoretical results suggesting that neural networks in the ``large width'' regime exhibit nonlinearities which are bounded (in the sense of the gradient having a small Lipschitz constant; for a linear function, the Lipschitz constant of the gradient is zero), and consequently may be analyzable using the small-gain approach we have used here. 

Second, one may be able to improve the sample complexity still further. However, we believe that such an improvement would most likely require a more complex algorithm, perhaps using momentum and acceleration, and at the very least going beyond the sort of classic actor-critic update we have studied. Moreover, because accelerated methods are less robust to to errors, the analysis of such an accelerated scheme could be challenging. 

Third, there are a number of popular variations of actor critic, such a ``natural actor critic'' obtained by adding regularization terms. In many applications, such methods are known to significantly outperform  actor critic. It may be possible to modify our analysis to give a rigorous analysis of those variations. As discussed in our literature review section, as of writing the best rate for natural actor critic is $O(\epsilon^{-3})$.

Finally, it would be interesting to specialize these rates further to problems of epidemic stabilization, where there is some structure in the underlying RL problem that can be exploited to obtain faster rates, e.g., using the model from \cite{ma2020optimal}.

\bibliographystyle{siamplain}
\bibliography{mybib}
\end{document}